\newtheorem{lem}{Lemma}[section]
\newtheorem{thm}[lem]{Theorem}
\newtheorem{cor}[lem]{Corollary}
\theoremstyle{definition}
\newtheorem{rem}[lem]{Remark}
\title{The third Homology of the special linear group of a field}
\author{Kevin Hutchinson,  \   Liqun Tao}
\address{School of Mathematical Sciences,
 University College Dublin}
\email{kevin.hutchinson@ucd.ie, \ lqtao@ucd.ie}
\date{\today}
\keywords{
$K$-theory, Special Linear Group, Group Homology
}
\subjclass{19G99, 20G10}
\newcommand{\imp}{\Longrightarrow}
\newcommand{\Q}{\Bbb{Q}}
\newcommand{\Z}{\Bbb{Z}}
\newcommand{\R}{\Bbb{R}}
\newcommand{\ee}{\epsilon}
\newcommand{\xgen}[2]{X_{#1}(#2)}
\newcommand{\Xgen}[1]{X_{#1}}
\newcommand{\cgen}[2]{C_{#1}(#2)}
\newcommand{\Cgen}[1]{C_{#1}}
\newcommand{\sgen}[1]{\mathrm{S}_{#1}}
\newcommand{\ddet}{\overline{\det}}
\newcommand{\set}[2]{\{#1 \mid #2 \}}
\renewcommand{\ker}[1]{\mathrm{Ker}(#1)}
\newcommand{\image}[1]{\mathrm{Im}(#1)}
\newcommand{\coker}[1]{\mathrm{Coker}(#1)}
\newcommand{\gr}[2]{#1[#2]}
\newcommand{\GR}[2]{#1\left[ #2\right]}
\newcommand{\aug}[1]{\mathcal{I}_{#1}}
\newcommand{\gw}[1]{\mathrm{GW}(#1)}
\newcommand{\wgen}[1]{\langle #1 \rangle}
\newcommand{\pfist}[1]{\langle\langle #1 \rangle\rangle}
\newcommand{\ffist}[1]{\langle\langle #1 \rangle\rangle}
\newcommand{\fgen}[1]{\langle #1 \rangle}
\newcommand{\filt}[2]{F_{#1}{#2}}
\newcommand{\ext}[2]{\bigwedge^{#1}\left({#2}\right)}
\newcommand{\proj}[2]{\mathbb{P}^{#1}(#2)}
\newcommand{\p}[1]{\tilde{#1}}
\newcommand{\kf}[2]{K_{#1} (#2)}
\newcommand{\mwk}[2]{K^{\mathrm{\small MW}}_{#1}({#2})}
\newcommand{\milk}[2]{K^{\mathrm{\small M}}_{#1}({#2})}
\newcommand{\milkt}[2]{k^{\mathrm{\small M}}_{#1}({#2})}
\newcommand{\indk}[2]{K_{#1}(#2)^{\mathrm{\small ind}}}
\newcommand{\qmilk}[2]{\bar{K}^{\mathrm{\small M}}_{#1}({#2})}
\newcommand{\ho}[3]{\mathrm{H}_{#1}(#2,#3 )}
\newcommand{\hoz}[2]{\mathrm{H}_{#1}(#2, \Z)}
\newcommand{\barhoz}[2]{\overline{\mathrm{H}}_{#1}(#2, \Z)}
\newcommand{\genl}[2]{\mathrm{GL}_{#1}(#2)}
\newcommand{\specl}[2]{\mathrm{SL}_{#1}(#2)}
\newcommand{\gnl}[1]{\mathrm{GL}(#1)}
\newcommand{\spcl}[1]{\mathrm{SL}(#1)}
\newcommand{\mat}[3]{\mathrm{M}(#1,#2,#3)}
\begin{document}

\maketitle

\begin{abstract}
We prove that for any infinite field $F$, the map $\ho{3}{\specl{n}{F}}{\Z}\to\ho{3}{\specl{n+1}{F}}{\Z}$ is an 
isomorphism for all $n\geq 3$. 
When $n=2$ the cokernel of this map is naturally isomorphic to $2\cdot \milk{3}{F}$, where $\milk{n}{F}$ is the $n$th Milnor $K$-group 
of $F$ .
We deduce that the natural homomorphism from $\hoz{3}{\specl{2}{F}}$
to the indecomposable $K_3$ of $F$, $\indk{3}{F}$, is surjective for any infinite field $F$.
\end{abstract}
\section{Introduction}

A well-known result of A. A. Suslin (\cite{sus:homgln}) relates the Milnor $K$-theory of a field, $F$,  
to the question of homology stability for the general linear group 
of the field. He shows that the maps $\hoz{p}{\genl{n}{F}}\to\hoz{p}{\genl{n+1}{F}}$ are isomorphisms for $n\geq p$ and that, when 
$n=p-1$, the cokernel is naturally isomorphic to $\milk{p}{F}$. In fact, if we let $H_p(F):= \hoz{p}{\genl{p}{F}}/\hoz{p}{\genl{p-1}{F}}$,
his arguments show that there is an isomorphism of graded rings $H_\bullet(F)\cong \milk{\bullet}{F}$ (where the multiplication on the 
first term comes from direct sum of matrices and cross product on homology). In particular, the non-negatively graded ring $H_\bullet(F)$
is generated in dimension $1$.

Recent work of Barge and Morel (\cite{barge:morel}) suggests that Milnor-Witt $K$-theory may play a somewhat analogous role for the homology
of the special linear group. The Milnor-Witt  $K$-theory of $F$ is a $\Z$-graded ring $\mwk{\bullet}{F}$ surjecting naturally onto 
Milnor $K$-theory. It arises as a ring of operations in stable motivic homotopy theory. (For a definition see section \ref{sec:background}
below, and for more details see \cite{morel:trieste,morel:puiss,morel:a1}.)
  Let $SH_p(F):=\hoz{p}{\specl{p}{F}}/\hoz{p}{\specl{p-1}{F}}$ for $p\geq 1$, and let $SH_0(F)=\GR{\Z}{F^\times}$
for convenience.
Barge and Morel construct a map of graded algebras $SH_\bullet(F)\to\mwk{\bullet}{F}$ for which the square
\begin{eqnarray*}
\xymatrix{
SH_\bullet(F)\ar[r]\ar[d]
&
\mwk{\bullet}{F}\ar[d]\\
H_\bullet(F)\ar[r]
&
\milk{\bullet}{F}
}
\end{eqnarray*}
commutes.

A result of Suslin (\cite{sus:tors}) implies that the map $\hoz{2}{\specl{2}{F}}=SH_2(F)\to\mwk{2}{F}$ is an isomorphism. Since positive-dimensional Milnor-Witt $K$-theory is generated in dimension $1$, it follows that 
the map of Barge and Morel is surjective in even dimensions  
greater than or equal to $2$. They ask the question whether it is in 
fact an isomorphism in even dimensions.

Of course, since $\specl{1}{F}=\{ 1\}$, $SH_1(F)=0$, and there is no possiblity that the map is an isomorphism in odd dimensions.  
Furthermore, Barge and Morel make the point that their map is also a map of graded 
$\GR{\Z}{F^\times}$-modules, but that $(F^\times)^2$ acts trivially on $\mwk{\bullet}{F}$
while $(F^\times)^n$ acts trivially on $SH_n(F)$.
It follows that, in odd dimensions, the 
image of their homomorphism must be contained in the $F^\times$-invariants, $\mwk{n}{F}^{F^\times}$, of Milnor-Witt $K$-theory. 
Now $2\milk{n}{F}$ occurs  naturally as a subgroup of  $\mwk{n}{F}^{F^\times}$, and these groups are very often equal (the obstruction 
to equality in general being a small subgroup of the torsion subgroup of the Witt ring of the field). The question thus arises whether the 
Barge-Morel homomorphism induces an isomorphism $SH_n(F)\cong 2\milk{n}{F}$ in odd dimensions greater than or equal to $3$.

This paper is intended primarily as a first step towards answering the question of Barge and Morel. The authors believe that 
it will be useful to answer the case $n=3$ in order to attack the higher-dimensional - even or odd - cases. Our main result is that the map 
$SH_3(F)\to \mwk{3}{F}$  induces an isomorphism $SH_3(F)\cong 2\milk{3}{F}$.  In the process, we also prove the homology 
stability result that the maps 
$\hoz{3}{\specl{n}{F}}\to\hoz{3}{\specl{n+1}{F}}$ are isomorphisms for all $n\geq 3$. 
Chih-Han Sah, in \cite{sah:discrete3}, states (without proving it) that these maps are isomorphisms for all $n\geq 4$. So our contribution here is to extend the known stability range by $1$. 

Observe that it easily follows from our main 
result that, for all odd $n$ greater than or equal to $3$, the image of the Barge-Morel map contains $2\milk{n}{F}$ (since 
in Milnor-Witt $K$-theory, $(2\milk{r}{F})\cdot\mwk{s}{F}=2\milk{r+s}{F}$).
As another application, we deduce  that for any infinite field $F$ the natural map 
$\ho{0}{F^\times}{\hoz{3}{\specl{2}{F}}}\to\indk{3}{F}$ (\cite{sus:bloch,sah:discrete3,pev:indk3,mirzaii:3rd}) is surjective. Suslin has asked the question whether this map is always an isomorphism.  

The homological techniques we use in this paper are standard in this area and go back to the earliest results on homology stability for 
linear groups. The new ingredient is our use of the Milnor conjecture and Milnor-Witt 
$K$-theory to prove  Theorem \ref{thm:main} below. The discussion above suggests that the appearance of Milnor-Witt $K$-theory in this context is not  merely incidental.

\section{Notation and Background Results}\label{sec:background}
\subsection{Group Rings and Grothendieck-Witt Rings}
For a group $G$, we let $\GR{\Z}{G}$ denote the corresponding integral groupring. It has an additive 
$\Z$-basis consisting of 
the elements $\fgen{g}$, $g\in G$, with the natural multiplication. (We use this notation in order to distinguish the elements 
$\fgen{1-a}$ from $1-\fgen{a}$ when $G$ is the multiplicative group, $F^\times$, of a field $F$.) 
There is an augmentation homomorphism $\epsilon:
\GR{\Z}{G}\to
\Z$, $\fgen{g}\mapsto 1$, whose kernel is the augmentation ideal $\aug{G}$, generated by the elements 
$\ffist{g}:=\fgen{g}-1$.    

The Grothendieck-Witt ring of a field $F$ is the Grothendieck group, $\gw{F}$, of the set of isometry classes of nondgenerate symmetric 
bilinear forms under orthogonal sum. Tensor product of forms induces a natural multiplication on the group. As an abstract ring, 
this can be described as the quotient of the ring $\GR{\Z}{F^\times/(F^\times)^2}$ by the ideal generated by the elements 
$\ffist{a}\cdot\ffist{1-a}$, $a\not= 0,1$. (This is just a mild reformulation of the presentation given in Lam, \cite{lam:intro}, Chapter II,
 Theorem 4.1.) Here, the induced ring homomorphism $\GR{\Z}{F^\times}\to \GR{\Z}{F^\times/(F^\times)^2}\to
\gw{F}$, sends $\fgen{a}$ to the class of the $1$-dimensional form with matrix $[a]$. This class is (also) denoted $\wgen{a}$. $\gw{F}$
is again an augmented ring and the augmentation ideal, $I(F)$, - also called the \emph{fundamental ideal} - 
is generated by \emph{Pfister $1$-forms}, $\pfist{a}$. It follows that the 
$n$-th power, $I^n(F)$, of this ideal is generated by \emph{Pfister} $n$-forms $\pfist{a_1,\ldots,a_n}:=\pfist{a_1}\cdots\pfist{a_n}$.

We will use below the following result on the powers of the fundamental ideal.

\begin{lem}[\cite{hutchinson:tao}, Cor 2.16] \label{lem:huttao}
 For all $n\geq 0$, $I^{n+2}(F)$
 is generated, as an additive group, by Pfister forms $\pfist{a_0,\ldots,a_{n+1}}$ subject to the following relations:
\begin{enumerate}
\item[(a)] $\pfist{a_0,\ldots,a_{n+1}}=0\mbox{ if } a_{n+1}\in (F^\times)^2$
\item[(b)] $\pfist{a_0,\ldots,a_i,a_{i+1},\ldots,a_{n+1}}=\pfist{a_0,\ldots,a_{i+1},a_i,\ldots,a_{n+1}}$
\item[(c)] $\pfist{a_0,\ldots,a_n,a_{n+1}a'_{n+1}}+\pfist{a_0,\ldots,a_{n-1},a_{n+1},a'_{n+1}}=\pfist{a_0,\ldots,a_na_{n+1},a'_{n+1}}+
\pfist{a_0,\ldots,a_n,a_{n+1}}$
\item[(d)] $\pfist{a_0,\ldots,a_n,a_{n+1}}=\pfist{a_0,\ldots,a_n,(1-a_n)a_{n+1}}$
\end{enumerate}

\end{lem}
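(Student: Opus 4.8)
Write $\Pi$ for the abelian group defined by the stated generators $\pfist{a_0,\dots,a_{n+1}}$ and relations (a)--(d); the plan is to show the natural map $\theta\colon\Pi\to I^{n+2}(F)$ is an isomorphism. That $\theta$ is well defined and surjective is the easy part: each generator is sent to the product $\pfist{a_0}\cdots\pfist{a_{n+1}}\in\gw F$, relations (a), (b), (d) are classical Pfister identities, and (c) follows by multiplying the identity $\pfist{x}\pfist{yz}+\pfist{y}\pfist{z}=\pfist{xy}\pfist{z}+\pfist{x}\pfist{y}$ in $\gw F$ (itself a consequence of $\pfist{xy}=\pfist{x}+\pfist{y}-\pfist{x}\pfist{y}$) through by $\pfist{a_0,\dots,a_{n-1}}$; surjectivity is clear since $I^{n+2}(F)$ is additively generated by $(n+2)$-fold Pfister forms. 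The content is injectivity.

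For injectivity I would build a two-sided inverse out of Morel's presentation of the Milnor--Witt K-theory ring $\mwk{\bullet}{F}$ (generators $[u]$ in degree $1$ and $\eta$ in degree $-1$; relations $[u][1-u]=0$, $[uv]=[u]+[v]+\eta[u][v]$, $[u]\eta=\eta[u]$, and $\eta h=0$ with $h=2+\eta[-1]$), recalled in Section~\ref{sec:background}. Multiplication by $\eta^{n+2}$ gives a surjection $\mwk{n+2}{F}\twoheadrightarrow I^{n+2}(F)$ sending $[u_1]\cdots[u_{n+2}]$ to $\pm\pfist{u_1,\dots,u_{n+2}}$, with kernel $h\cdot\mwk{n+2}{F}$ (which one can identify with $2\milk{n+2}{F}$). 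I would then define $\Psi\colon\mwk{n+2}{F}\to\Pi$ on Morel's generators by $[u_1]\cdots[u_{n+2}]\mapsto\pfist{u_1,\dots,u_{n+2}}$ and, more generally, $\eta^{j}[u_1]\cdots[u_{n+2+j}]\mapsto(-1)^{j}$ times the $j$-fold iterated ``multiplicativity defect'' $\pfist{\dots,u\cdot v}-\pfist{\dots,u}-\pfist{\dots,v}$ coming from relation (c) (for $j=0$ this is the generator itself, for $j=1$ it is exactly the expression in (c)). Checking that this respects all of Morel's relations is the heart of the argument: the Steinberg relation reduces to (d) (which together with (a) forces $\pfist{\dots,a,1-a}=0$), twisted multiplicativity reduces to (c), centrality of $\eta$ to (b), and $\eta h=0$ reduces --- after using (a) --- to the identity $2\,\pfist{u_1,\dots,u_{n+2}}=\pfist{-1,u_1,\dots,u_{n+2}}$, which I would first establish inside $\Pi$ from (a)--(d). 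Granting this, $\Psi$ kills $h\cdot\mwk{n+2}{F}$, hence descends to $\bar\Psi\colon I^{n+2}(F)\to\Pi$; as $\theta\circ\bar\Psi=\mathrm{id}$ and $\bar\Psi\circ\theta=\mathrm{id}$ on generators, $\theta$ is an isomorphism.

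The main obstacle is precisely the verification that $\Psi$ is well defined: translating each of Morel's four relations into a consequence of (a)--(d), keeping the iterated defects and the element $h$ in bookkeeping order, and in particular noticing that relation (a) does essential work (it is the only one of the four that is not a purely formal manipulation). A secondary point is that this route leans on Morel's computation of $\mwk{\bullet}{F}$, and on the identification of the kernel above --- hence, implicitly, on the Milnor conjecture, which is in any case used elsewhere in the paper. An alternative organisation bypasses $\mwk{\bullet}{F}$ by filtering: the subgroup $D\le\Pi$ generated by the defects of (c) satisfies $\Pi/D\cong\milkt{n+2}{F}\xrightarrow{\ \sim\ }I^{n+2}(F)/I^{n+3}(F)$ (the last map being the Milnor conjecture isomorphism), so $\ker\theta\subseteq D$, and it remains to identify $D$ with $I^{n+3}(F)$; but that assertion has exactly the same shape with $n$ replaced by $n+1$, so it must be carried out for all $n$ simultaneously rather than by a terminating induction, which is the delicate feature of that approach.
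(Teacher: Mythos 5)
Note first that the paper does not prove Lemma \ref{lem:huttao} at all: it is imported by citation from \cite{hutchinson:tao}, Cor.\ 2.16, so there is no internal argument to compare yours with. Your overall route --- identify $I^{n+2}(F)$ with $\mwk{n+2}{F}/h\cdot\mwk{n+2}{F}$ via Morel's theorem and construct an inverse to $\theta$ by sending monomials to iterated ``defect'' expressions in $\Pi$ --- is the natural one and fits the toolkit this paper uses elsewhere (Milnor--Witt $K$-theory plus the Milnor conjecture). The easy half is indeed easy: $\theta$ is well defined (your verification of (c) via $\pfist{xy}=\pfist{x}+\pfist{y}+\pfist{x}\pfist{y}$ is correct) and surjective.

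The genuine gap is that the well-definedness of $\Psi$, which you yourself call the heart, is not carried out, and it is substantially more than the four one-line reductions you list. Concretely: (i) $\mwk{\bullet}{F}$ is presented as a \emph{ring}, so to define a homomorphism out of its degree-$(n+2)$ piece you must kill the degree-$(n+2)$ part of the two-sided ideal generated by Morel's relations, i.e.\ check each relation after multiplication by arbitrary monomials (with $\eta$'s interspersed) on both sides; the Steinberg and twisted-multiplicativity relations then occur in \emph{interior} slots, where (a)$+$(d) and (c) do not apply directly, and transporting them there requires symmetry statements for the defect representatives, not merely relation (b), which only speaks about the $(n+2)$-fold generators of $\Pi$. (ii) Even before that, the value $\Psi(\eta^{j}[u_1]\cdots[u_{n+2+j}])$ must be shown to be a well-defined element of $\Pi$, independent of the order and the slots in which the $j$ defects are expanded; this is itself a nontrivial induction inside $\Pi$ using (b) and (c), and together with derived identities such as $\pfist{\ldots,a,1-a,\ldots}=0$ in arbitrary positions and the $h$-identity for defect representatives it must be established from (a)--(d) before your four checks even make sense. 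None of these derivations are given. (iii) The sign bookkeeping is off as stated: with the paper's conventions $\eta[a]$ corresponds to $\pfist{a}$, so no $(-1)^{j}$ should appear, and the identity you need is of the form $\pfist{-1}\cdot(\text{defect rep})=-2\cdot(\text{defect rep})$ rather than $+2$; harmless in principle, but it signals that the bookkeeping you defer is exactly where errors would hide. Finally, your alternative filtration argument is, as you note yourself, circular as formulated. So the strategy is plausible, but the proposal as written does not yet constitute a proof of the lemma.
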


Now let $\textbf{h}:=\wgen{1}+\wgen{-1}=\pfist{-1}+2\in \gw{F}$. Then $\textbf{h}\cdot I(F)=0$, and the \emph{Witt ring} of $F$ is the ring
\[
W(F):=\frac{\gw{F}}{\langle \textbf{h}\rangle}=\frac{\gw{F}}{\textbf{h}\cdot \Z}.
\] 
Since $\textbf{h}\mapsto 2$ under the augmentation, there is a natural ring homomorphism $W(F)\to \Z/2$. The fundamental ideal $I(F)$ of 
$\gw{F}$ 
maps isomorphically to the kernel of this ring homomorphism under the map $\gw{F}\to W(F)$, and we also let $I(F)$ denote this ideal.

For $n\leq 0$, we define $I^n(F):=W(F)$. The graded additive group $I^\bullet(F)=\{ I^n(F)\}_{n\in\Z}$ is given the structure of a 
commutative graded ring using the natural graded multiplication induced from the multiplication on $W(F)$. In particular, if we let 
$\eta\in I^{-1}(F)$ be the element corresponding to $1\in W(F)$, then multiplication by $\eta:I^{n+1}(F)\to I^n(F)$ is just the natural 
inclusion.

\subsection{Milnor $K$-theory and Milnor-Witt $K$-theory}

The Milnor ring of a field $F$  (see \cite{milnor:intro}) is the graded ring
\(
\milk{\bullet}{F}
\)
with the following presentation:

Generators: $\{ a\}$ , $a\in F^\times$, in dimension $1$.

Relations:
\begin{enumerate}
\item[(a)] $\{ ab\} =\{ a\} +\{ b\}$ for all $a,b\in F^\times$.
\item[(b)] $\{a\}\cdot\{1-a\}=0$ for all $a\in F^\times\setminus\{ 1\}$.
\end{enumerate}
The product $\{ a_1\}\cdots\{ a_n\}$ in $\milk{n}{F}$ is also written $\{ a_1,\ldots,a_n\}$. So $\milk{0}{F}=\Z$ and $\milk{1}{F}$ is 
an additive group isomorphic to $F^\times$.

We let $\milkt{\bullet}{F}$ denote the graded ring $\milk{\bullet}{F}/2$ and let $i^n(F):=I^n(F)/I^{n+1}(F)$, so that $i^\bullet(F)$ is a 
non-negatively graded ring. 

In the 1990s, Voevodsky and his collaborators proved a fundamental and deep theorem - originally conjectured by Milnor (\cite{milnor:quad}) - 
relating Milnor $K$-theory to quadratic form theory:

\begin{thm}[\cite{voevodsky:orlovvishik}]
There is a natural isomorphism of graded rings $\milkt{\bullet}{F}\cong i^\bullet(F)$ sending $\{ a\}$ to $\pfist{a}$.

In particular for all $n\geq 1$ we have a natural identification of $\milkt{n}{F}$ and $i^n(F)$ under which the symbol $\{ a_1,\ldots,a_n\}$ 
corresponds to the class of the form $\pfist{a_1,\ldots,a_n}$.
\end{thm}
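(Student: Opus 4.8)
The plan is to treat this as the mod-$2$ Milnor conjecture on quadratic forms that it is: I would not attempt a fresh proof but would organise the known one around Voevodsky's norm residue theorem and isolate where the real depth sits. It has three parts --- construct the comparison map and get surjectivity by elementary means; reduce injectivity to a statement in mod-$2$ Galois cohomology; and invoke the Orlov--Vishik--Voevodsky machinery for that statement.

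\textbf{Step 1: the comparison homomorphism and surjectivity.} First I would define $s_1\colon\milk{1}{F}\to i^1(F)=I(F)/I^2(F)$ by $\{a\}\mapsto\pfist{a}\bmod I^2(F)$, and check the two Milnor relations: additivity follows from $\pfist{ab}=\pfist{a}+\pfist{b}+\pfist{a}\pfist{b}$ in $\gw{F}$, hence $\pfist{ab}\equiv\pfist{a}+\pfist{b}\pmod{I^2(F)}$; and the Steinberg relation $\pfist{a}\pfist{1-a}=0$ holds already in $W(F)$, because the $2$-fold Pfister form $\pfist{a,1-a}$ is isotropic, hence hyperbolic --- a classical computation (\cite{lam:intro}). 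Imposing multiplicativity then forces a graded ring homomorphism $s_\bullet\colon\milk{\bullet}{F}\to i^\bullet(F)$, $\{a_1,\ldots,a_n\}\mapsto\pfist{a_1,\ldots,a_n}\bmod I^{n+1}(F)$, and this factors through $\milkt{\bullet}{F}$ since $2\,i^n(F)=0$ (as $2\in I(F)$ in $W(F)$, so $2\,I^n(F)\subseteq I^{n+1}(F)$). Surjectivity of each $s_n$ is then immediate: $I(F)$ is additively generated by the elements $\pfist{a}$, and from $\pfist{a_1,\ldots,a_n}\wgen{b}=\pfist{a_1,\ldots,a_{n-1},ba_n}-\pfist{a_1,\ldots,a_{n-1},b}$ one sees that $I^n(F)$ is additively generated by $n$-fold Pfister forms, all of which lie in the image of $s_n$ (compare Lemma~\ref{lem:huttao}). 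So the entire problem is the injectivity of $s_n$ for each $n\ge 1$.

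\textbf{Step 2: reduction to Galois cohomology.} Next I would bring in the mod-$2$ Galois cohomology $\coh{n}{F}{\Z/2}$, the Galois symbol $h_n\colon\milkt{n}{F}\to\coh{n}{F}{\Z/2}$, $\{a_1,\ldots,a_n\}\mapsto(a_1)\cup\cdots\cup(a_n)$, and the $n$-th cohomological invariant of quadratic forms $e_n\colon i^n(F)\to\coh{n}{F}{\Z/2}$, $\pfist{a_1,\ldots,a_n}\mapsto(a_1)\cup\cdots\cup(a_n)$ (for $n=0,1,2$ this is the rank mod $2$, the discriminant, and the Clifford invariant). These fit together as $e_n\circ s_n=h_n$. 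By Voevodsky's norm residue theorem --- the mod-$2$ Bloch--Kato isomorphism, which is the deep engine behind \cite{voevodsky:orlovvishik} --- the map $h_n$ is an isomorphism for every $n$; hence, once $e_n$ is known to be a well-defined homomorphism on all of $i^n(F)$, the factorisation $h_n=e_n\circ s_n$ forces $s_n$ injective, and with Step~1 an isomorphism, from which $e_n$ is an isomorphism as well; compatibility with products then upgrades this to the isomorphism of graded rings in the statement. The theorem is classical for $n\le 2$ (Merkurjev's theorem being the crucial case $n=2$) and was known in a few more low degrees through work of Arason, Jacob and Rost, but in general the existence of $e_n$ on all of $I^n(F)$ is itself equivalent to the conjecture and is precisely what must be proved.

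\textbf{Step 3: the main obstacle.} The hard part --- which for the present paper I would simply cite --- is the Orlov--Vishik--Voevodsky theorem, which establishes injectivity of $s_n$ by induction on $n$. Its engine is an exact sequence for mod-$2$ Milnor $K$-theory under passage to the function field $E=F(\phi)$ of an anisotropic Pfister quadric $\phi=\pfist{a_1,\ldots,a_n}$, relating $\milkt{*}{F}$, the ideal generated by the symbol $\{a_1,\ldots,a_n\}$, and $\milkt{*}{E}$; this in turn rests on Voevodsky's computation of the motivic cohomology of Pfister quadrics (Rost motives). Matching this against the behaviour of the filtration $I^\bullet(F)$ under such extensions, and using the Arason--Pfister Hauptsatz $\bigcap_n I^n(F)=0$ to propagate injectivity up the tower of quadric extensions, one obtains the isomorphisms $s_n$ (and $e_n$) in all degrees. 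This motivic input is the genuine difficulty and the reason the theorem is so deep; in what follows the identification $\milkt{n}{F}=i^n(F)$, $\{a_1,\ldots,a_n\}\leftrightarrow\pfist{a_1,\ldots,a_n}$, is used only as a black box, so it suffices to quote \cite{voevodsky:orlovvishik}.
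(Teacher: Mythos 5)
Your proposal is fine: the paper offers no proof of this statement at all, simply quoting it as the resolved Milnor conjecture from \cite{voevodsky:orlovvishik}, and your outline (elementary construction of $\{a\}\mapsto\pfist{a}$, surjectivity from Pfister generation of $I^n(F)$, injectivity black-boxed to the Orlov--Vishik--Voevodsky/norm-residue machinery) is an accurate sketch of that cited literature with the genuinely deep step correctly left as a citation. So you take essentially the same approach as the paper, just with more of the standard reduction spelled out.
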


The Milnor-Witt $K$-theory of a field is the graded ring $\mwk{\bullet}{F}$ with the following presentation (due to F. Morel and M. Hopkins,
see \cite{morel:trieste}):

Generators: $[a]$, $a\in F^\times$, in dimension $1$ and a further generator $\eta$ in dimension $-1$.

Relations: 
\begin{enumerate}
\item[(a)] $[ab]=[a]+[b]+\eta\cdot [a]\cdot [b]$ for all $a,b\in F^\times$
\item[(b)] $[a]\cdot[1-a]=0$ for all $a\in F^\times\setminus\{ 1\}$
\item[(c)] $\eta\cdot [a]=[a]\cdot \eta$ for all $a\in F^\times$
\item[(d)] $\eta\cdot h=0$, where $h=\eta\cdot [-1] +2\in \mwk{0}{F}$.
\end{enumerate}

Clearly there is a unique surjective homomorphism of graded rings $\mwk{\bullet}{F}\to\milk{\bullet}{F}$ sending $[a]$ to $\{ a\}$
 and inducing an isomorphism
\[
\frac{\mwk{\bullet}{F}}{\langle \eta \rangle}\cong \milk{\bullet}{F}.
\]

Furthermore, there is a natural surjective homomorphism of graded rings $\mwk{\bullet}{F}\to I^\bullet(F)$ sending $[a]$ to $\pfist{a}$ and 
$\eta$ to $\eta$. Morel shows that there is an induced isomorphism of graded rings
\[
\frac{\mwk{\bullet}{F}}{\langle h \rangle}\cong I^{\bullet}(F).
\]


The main structure theorem on Milnor-Witt $K$-theory is the following theorem of Morel:

\begin{thm}[Morel, \cite{morel:puiss}]
The commutative square of graded rings
\begin{eqnarray*}
\xymatrix{
\mwk{\bullet}{F}\ar[r]\ar[d]
&
\milk{\bullet}{F}\ar[d]\\
I^\bullet(F)\ar[r]
&
\milkt{\bullet}{F}
}
\end{eqnarray*}
is cartesian.
\end{thm}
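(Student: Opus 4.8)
The plan is to reduce the cartesian‑square assertion to a degreewise isomorphism of kernels, and then to deduce that isomorphism formally from the two presentations $\milk{\bullet}{F}\cong\mwk{\bullet}{F}/\langle\eta\rangle$ and $I^\bullet(F)\cong\mwk{\bullet}{F}/\langle h\rangle$ together with the relation $\eta\cdot h=0$. The first ingredient is purely homological: a commutative square of abelian groups whose top horizontal arrow is surjective is cartesian if and only if the map induced on the kernels of the two horizontal arrows (by the left vertical arrow) is an isomorphism. Since $\mwk{n}{F}\to\milk{n}{F}$ is surjective for every $n\in\Z$, it therefore suffices to show that for each $n$ the vertical map $\mwk{n}{F}\to I^n(F)$ carries $\ker(\mwk{n}{F}\to\milk{n}{F})$ isomorphically onto $\ker(I^n(F)\to\milkt{n}{F})$.

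Next I would identify the two kernels explicitly. Since $\eta$ is homogeneous of degree $-1$ and $\milk{\bullet}{F}=\mwk{\bullet}{F}/\langle\eta\rangle$, the degree‑$n$ part of the ideal $\langle\eta\rangle$ is $\eta\cdot\mwk{n+1}{F}$, so $\ker(\mwk{n}{F}\to\milk{n}{F})=\eta\cdot\mwk{n+1}{F}$ for all $n$. On the other side, the theorem of Voevodsky recalled above identifies $\milkt{\bullet}{F}$ with $i^\bullet(F)$, under which the bottom arrow of the square becomes the natural projection $I^n(F)\to I^n(F)/I^{n+1}(F)$ (one checks this on the generators $\pfist{a}$, using commutativity of the square and surjectivity of $\mwk{\bullet}{F}\to I^\bullet(F)$); hence $\ker(I^n(F)\to\milkt{n}{F})=I^{n+1}(F)$. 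This covers the low degrees uniformly: in degree $0$ it is the augmentation $W(F)\to\Z/2$ with kernel $I(F)=I^1(F)$, and in negative degrees it is $0$ with kernel $W(F)=I^{n+1}(F)$.

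It then remains to prove that $\eta\cdot\mwk{n+1}{F}\to I^{n+1}(F)$, the restriction of $\mwk{n}{F}\to I^n(F)$, is an isomorphism. For surjectivity, recall that multiplication by $\eta$ on $I^\bullet(F)$ is the inclusion $I^{n+1}(F)\hookrightarrow I^n(F)$; hence the composite $\mwk{n+1}{F}\xrightarrow{\ \eta\ }\mwk{n}{F}\to I^n(F)$ coincides with $\mwk{n+1}{F}\twoheadrightarrow I^{n+1}(F)\hookrightarrow I^n(F)$, which has image exactly $I^{n+1}(F)$. For injectivity, suppose $x\in\mwk{n+1}{F}$ and $\eta x$ maps to $0$ in $I^n(F)$; by the same factorization $x$ maps to $0$ in $I^{n+1}(F)$, so $x$ lies in $\ker(\mwk{n+1}{F}\to I^{n+1}(F))=h\cdot\mwk{n+1}{F}$ (using $I^\bullet(F)=\mwk{\bullet}{F}/\langle h\rangle$ and that $h$ has degree $0$). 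Writing $x=hy$ gives $\eta x=(\eta h)y=0$ by relation (d). This yields the required isomorphism of kernels, hence the theorem; in particular the degree‑$0$ instance recovers the classical pullback $\gw{F}\cong\Z\times_{\Z/2}W(F)$.

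I expect no serious obstacle at this level: granted the three deep inputs quoted in the excerpt — Morel's two identifications of $\mwk{\bullet}{F}$ as a quotient by $\langle\eta\rangle$ and by $\langle h\rangle$, and the Milnor conjecture in the form $\milkt{\bullet}{F}\cong i^\bullet(F)$ — the argument is entirely formal, amounting to the observation that the kernels of the two horizontal maps are the principal ideals generated by $\eta$ and those of the two vertical maps by $h$, spliced together by the single relation $\eta h=0$. If one insisted on a self‑contained proof, the genuine difficulty would of course be the Milnor conjecture; the only other care needed is the bookkeeping in non‑positive degrees, which the uniform argument above absorbs without extra work.
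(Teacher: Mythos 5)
The paper offers no proof of this statement: it is quoted as Morel's theorem from \cite{morel:puiss}, so there is no internal argument to measure your proposal against, and I can only assess it on its own terms.

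As a formal deduction from the facts quoted alongside it in this section, your argument is essentially correct. The reduction is sound: since $\mwk{n}{F}\to\milk{n}{F}$ is surjective in every degree, the square is cartesian (degreewise, as abelian groups, which suffices) if and only if the left vertical map restricts to an isomorphism $\ker{\mwk{n}{F}\to\milk{n}{F}}\to\ker{I^n(F)\to\milkt{n}{F}}$; your identifications of these kernels with $\eta\cdot\mwk{n+1}{F}$ and with $I^{n+1}(F)$ (the latter via Voevodsky's theorem, the degenerate degrees $n\le 0$ being absorbed as you indicate) are right; and surjectivity and injectivity of the restriction do follow from the factorization through $I^{n+1}(F)$ together with $\eta h=0$. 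One small repair: in the injectivity step you assert $\ker{\mwk{n+1}{F}\to I^{n+1}(F)}=h\cdot\mwk{n+1}{F}$ ``because $h$ has degree $0$''. Since $\mwk{\bullet}{F}$ is not commutative, the degree-$(n+1)$ component of the two-sided ideal $\langle h\rangle$ equals $h\cdot\mwk{n+1}{F}$ only because $h$ is in fact central, a true but unquoted lemma; you can avoid it altogether, because $\eta$ is central by relation (c), so for any element $\sum_i a_ihb_i$ of $\langle h\rangle$ one has $\eta\cdot\sum_i a_ihb_i=\sum_i a_i(\eta h)b_i=0$, which is all your injectivity step needs. The more substantive caveat is that one of your inputs, $\mwk{\bullet}{F}/\langle h\rangle\cong I^\bullet(F)$, is itself part of the package proved in \cite{morel:puiss}; its proof is where the presentation of the powers of the fundamental ideal (cf.\ Lemma \ref{lem:huttao}) and hence the Milnor conjecture genuinely enter. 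So what you have is a correct derivation of the cartesian square from the other results quoted here, rather than an independent proof of Morel's theorem; that is adequate for the role the statement plays in this paper, but it should be presented as such.
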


Thus for each $n\in \Z$ we have an isomorphism
\[
\mwk{n}{F}\cong \milk{n}{F}\times_{i^n(F)}I^n(F).
\]

It follows, for example, that for all $n$, the kernel of $\mwk{n}{F}\to\milk{n}{F}$ is isomorphic to $I^{n+1}(F)$ and, 
for $n\geq 0$,  the inclusion
$I^{n+1}(F)\to\mwk{n}{F}$ is given by $\pfist{a_1,\ldots,a_{n+1}}\mapsto \eta[a_1]\cdots[a_n]$.

When $n=0$ we have an isomorphism of rings
\[
\gw{F}\cong W(F)\times_{\Z/2}\Z\cong \mwk{0}{F}.
\]
Under this isomorphism $\pfist{a}$ corresponds to $\eta [a]$ and $\wgen{a}$ corresponds to $\eta[a]+1$. 

Thus each $\mwk{n}{F}$ has the structure of a $\gw{F}$-module (and hence also of a $\GR{\Z}{F^\times}$-module), with the action given by 
$\ffist{a}\cdot([a_1]\cdots [a_n])=\eta[a] [a_1]\cdots [a_n]$.

Thus, for all $n\geq 0$,
\[
\aug{F^\times}\cdot\mwk{n}{F}=I(F)\cdot\mwk{n}{F}=\eta\cdot\mwk{n+1}{F}=I^{n+1}(F) 
\]
and hence 
\[
\ho{0}{F^\times}{\mwk{n}{F}}=\frac{\mwk{n}{F}}{\aug{F^\times}\cdot\mwk{n}{F}}\cong \milk{n}{F}.
\]

\subsection{Homology of Groups}

Given a group $G$ and a $\GR{\Z}{G}$-module $M$, $\ho{n}{G}{M}$ will denote the $n$th homology group of $G$ with coefficients in the 
module $M$. $B_\bullet(G)$ will denote the \emph{right bar resolution of $G$}: $B_n(G)$ is the free right $\GR{\Z}{G}$-module with basis
the elements $[g_1|\cdots |g_n]$, $g_i\in G$. ($B_0(G)$ is isomorphic to $\GR{\Z}{G}$ with generator the symbol $[\ ]$.) The boundary 
$d=d_n:B_n(G)\to B_{n-1}(G)$, $n\geq 1$, is given by 
\[
d([g_1|\cdots|g_n])= \sum_{i=o}^{n-1}(-1)^i[g_1|\cdots|\hat{g_i}|\cdots|g_n]+(-1)^n[g_1|\cdots|g_{n-1}]\fgen{g_n}.
\]  
The augmentation $B_0(G)\to \Z$ makes $B_\bullet(G)$ into a free resolution of the trivial $\GR{\Z}{G}$-module $\Z$, and thus 
$\ho{n}{G}{M}=H_n(B_\bullet(G)\otimes_{\GR{\Z}{G}}M)$.

If $C_\bullet = (C_q,d)$ is a non-negative complex of $\GR{\Z}{G}$-modules, then 
$E_{\bullet,\bullet}:=B_\bullet(G)\otimes_{\gr{\Z}{G}}C_\bullet$ 
is a double complex  of abelian groups.
Each of the two filtrations on $E_{\bullet,\bullet}$ gives a spectral sequence  converging to the homology of the total complex of 
$E_{\bullet,\bullet}$,  which is by definition, 
$\ho{\bullet}{G}{C}$. (see, for example, Brown, \cite{brown:coh}, Chapter VII).

The first spectral sequence has the form
\[
E^2_{p,q}=\ho{p}{G}{H_q(C)}\Longrightarrow \ho{p+q}{G}{C}.
\] 
In the special case that there is a weak equivalence $C_\bullet \to \Z$ (the complex consisting of the trivial module 
$\Z$ concentrated in dimension $0$), it follows that $\ho{\bullet}{G}{C}=\ho{\bullet}{G}{\Z}$. 

The second spectral sequence has the form 
\[
E^1_{p,q}=\ho{p}{G}{C_q}\Longrightarrow\ho{p+q}{G}{C}.
\]
Thus, if $C_\bullet$ is weakly equivalent to $\Z$, this gives a spectral sequence converging to $\ho{\bullet}{G}{\Z}$. 

In the next theorem, we summarize some of the main results of \cite{sus:homgln}:

\begin{thm}\label{thm:suslin} 
Let $T_n$ be the group of diagonal matrices in $\genl{n}{F}$.
\begin{enumerate}
\item[(a)] The natural maps $\hoz{p}{\genl{n}{F}}\to\hoz{p}{\genl{n+1}{F}}$ are isomorphisms if $n\geq p$.
\item[(b)] For any fixed $p$ and any $n\geq p$ there is a natural map $\ee=\ee_{p,n}: \hoz{p}{\genl{n}{F}}\to \milk{p}{F}$ whose kernel is the image of 
$\hoz{p}{\genl{p-1}{F}}$. 
\item[(c)] For $n\geq p$, the composite
\begin{eqnarray*}
\xymatrix{
\ext{p}{T_n}\ar[r]& \hoz{p}{T_n}\ar[r]&\hoz{p}{\genl{n}{F}}\ar[r]^-{\ee}&\milk{p}{F}\\
}
\end{eqnarray*}
is given by
\begin{eqnarray*}\label{form:mult} 
(a_{1,1},\ldots,a_{1,n})\wedge \cdots \wedge (a_{p,1},\ldots,a_{p,n})\mapsto
 \sum_{J=(j_1,\ldots,j_p)}
\left\{ a_{1,j_1},\ldots,a_{p,j_p}\right\}
\end{eqnarray*}
where $J$ ranges over all ordered $p$-tuples of distinct points of $\{ 1,\ldots,n\}$.
\item[(d)] For any $n\geq 3$ the composite
\begin{eqnarray*}
\xymatrix{
\hoz{2}{\specl{n}{F}}\ar[r]&\hoz{2}{\genl{n}{F}}\ar[r]^-{\ee}&\milk{2}{F}\\
}
\end{eqnarray*}
is an isomorphism.
\end{enumerate}
\end{thm}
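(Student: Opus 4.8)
\emph{Sketch of proof (following Suslin \cite{sus:homgln}).} The plan is to control all four statements through the homology of the complex of vectors in general position. For $F$ infinite and $n\geq 1$, let $\Cgen{\bullet}=\Cgen{\bullet}(F^n)$ be the complex of $\gr{\Z}{\genl{n}{F}}$-modules in which $\cgen{q}{F^n}$ is free on the $(q+1)$-tuples $(v_0,\ldots,v_q)$ of vectors of $F^n$ any $\min(q+1,n)$ of which are linearly independent, with the usual simplicial boundary and with $\genl{n}{F}$ acting diagonally. The first step is to prove that $\mathrm{H}_i(\Cgen{\bullet})=0$ for $1\leq i\leq n-1$ and $\mathrm{H}_0(\Cgen{\bullet})=\Z$; here one uses that $F$ is infinite, so that any family in general position can be enlarged by a further vector lying outside the finitely many proper subspaces that must be avoided.

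Next, form the double complex $E_{\bullet,\bullet}=\Cgen{\bullet}\otimes_{\gr{\Z}{\genl{n}{F}}}B_\bullet(\genl{n}{F})$. By the acyclicity just established, one of its spectral sequences shows that the total homology agrees with $\hoz{\bullet}{\genl{n}{F}}$ in the relevant range, while the other has $E^1_{p,q}=\ho{p}{\genl{n}{F}}{\cgen{q}{F^n}}$, which by Shapiro's lemma is the sum over $\genl{n}{F}$-orbits of $q$-cells of the homology of their stabilisers. For $0\leq q\leq n-1$ the action on nondegenerate cells is transitive and the stabiliser of $(e_0,\ldots,e_q)$ is the group of block matrices $\left(\begin{smallmatrix}I_{q+1}&*\\0&A\end{smallmatrix}\right)$ with $A\in\genl{n-q-1}{F}$. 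The key homological lemma is that such a stabiliser has the same homology as $\genl{n-q-1}{F}$: the unipotent kernel of the projection to $\genl{n-q-1}{F}$ is, as an abelian group, a sum of copies of the additive group of $F$; a suitable scalar matrix in $\genl{n-q-1}{F}$ acts on it through a nontrivial character while acting trivially on $\hoz{\bullet}{\genl{n-q-1}{F}}$, so the Hochschild--Serre spectral sequence of the extension collapses (a ``centre kills homology'' argument, again needing enough elements in $F^\times$). Comparing the two spectral sequences by induction on $n$ now yields (a).

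For (b) and (c): when $n=p$, the failure of injectivity of the stabilisation map is governed by the top row $E^1_{0,p}=\ho{0}{\genl{p}{F}}{\cgen{p}{F^p}}$, a free abelian group on the $\genl{p}{F}$-orbits of $(p+1)$-tuples of vectors in general position in $F^p$; after normalising $p$ of the vectors to $e_1,\ldots,e_p$, such an orbit is recorded by the coordinates $(a_1,\ldots,a_p)\in(F^\times)^p$ of the remaining vector. Tracking the differentials of the spectral sequence shows that on passing to $E^\infty$ these classes are subjected to exactly the multiplicativity and Steinberg relations defining $\milk{p}{F}$; this identifies $\coker{\hoz{p}{\genl{p-1}{F}}\to\hoz{p}{\genl{p}{F}}}$ with $\milk{p}{F}$, and composing with the stabilisation isomorphisms of (a) defines $\ee=\ee_{p,n}$ with the stated kernel. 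The formula in (c) is then obtained by pushing a generator $(a_{1,1},\ldots,a_{1,n})\wedge\cdots\wedge(a_{p,1},\ldots,a_{p,n})$ of $\ext{p}{T_n}$ through $\hoz{p}{T_n}\to\hoz{p}{\genl{n}{F}}$ using the bar resolution and the shuffle map and reading off its image under this identification; the sum over ordered $p$-tuples of distinct columns is precisely the combinatorial content of the Eilenberg--Zilber map.

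Finally, for (d), fix $n\geq 3$ and consider the split extension $1\to\specl{n}{F}\to\genl{n}{F}\xrightarrow{\det}F^\times\to 1$ with section $u\mapsto\mathrm{diag}(u,1,\ldots,1)$. Since $\specl{n}{F}$ is perfect, $\hoz{1}{\specl{n}{F}}=0$, so the rows $E^2_{\star,1}$ of the Hochschild--Serre spectral sequence vanish; by Matsumoto's theorem $\hoz{2}{\specl{n}{F}}\cong\milk{2}{F}$ and the conjugation action of $\genl{n}{F}$ on it is trivial (it factors through the trivial action on $K_2$). Hence $E^2_{0,2}=\hoz{2}{\specl{n}{F}}$ and $E^2_{2,0}=\hoz{2}{F^\times}$ survive, and the splitting gives $\hoz{2}{\genl{n}{F}}\cong\hoz{2}{\specl{n}{F}}\oplus\hoz{2}{F^\times}$, where the second summand is the image of $\hoz{2}{\genl{1}{F}}$ under the upper-left block inclusion. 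By (b) with $p=2$ this image is exactly $\ker{\ee}$, so $\ee$ restricts to an isomorphism from the complementary summand $\hoz{2}{\specl{n}{F}}$ onto $\milk{2}{F}$, which is (d). The main obstacle is the homological input of the second step: the acyclicity of the general-position complex in the correct range and, above all, the ``centre kills homology'' lemma for the affine stabilisers --- these are the only places where the arithmetic of $F$ genuinely enters, and everything else is bookkeeping with the two spectral sequences.
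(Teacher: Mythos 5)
Your overall route coincides with the one the paper relies on: for (a)--(b) the paper simply quotes Suslin, and your sketch (acyclicity of the general-position complex, the ``centre kills homology'' lemma for the affine stabilisers, comparison of the two spectral sequences) is exactly Suslin's own proof; for (d) your use of the Hochschild--Serre spectral sequence of $\det$ with its splitting is the paper's argument as well. Two steps, however, are not established as written.

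First, in (c): the restriction to \emph{distinct} column indices is not ``the combinatorial content of the Eilenberg--Zilber map''. Expanding $(a_{1,1},\ldots,a_{1,n})\wedge\cdots\wedge(a_{p,1},\ldots,a_{p,n})$ multilinearly over the decomposition $T_n\cong(F^\times)^n$ (equivalently, via the shuffle product) produces a sum over \emph{all} tuples $(k_1,\ldots,k_p)\in\{1,\ldots,n\}^p$, including those with repeated entries. To get the displayed formula one must argue separately that (i) any term with a repeated index lies in the image of $\hoz{p}{\genl{p-1}{F}}$, hence in $\ker{\ee}$ by (b), so contributes $0$; and (ii) a term with distinct indices $(j_1,\ldots,j_p)$ maps to $\{a_{1,j_1},\ldots,a_{p,j_p}\}$ --- this uses Suslin's computation of the image of $\rho_1(a_1)\wedge\cdots\wedge\rho_p(a_p)$ (Cor.\ 2.7.2 of \cite{sus:homgln}) together with conjugation by a permutation matrix, which acts trivially on $\hoz{\bullet}{\genl{n}{F}}$, to reduce to the standard ordering. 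Your sketch asserts the conclusion but supplies neither (i) nor (ii), and the appeal to Eilenberg--Zilber alone would give the wrong (larger) sum.

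Second, in (d): the entire weight rests on the triviality of the conjugation ($F^\times$-) action on $\hoz{2}{\specl{n}{F}}$ for $n\geq 3$, and the parenthetical ``it factors through the trivial action on $K_2$'' is not a proof: to invoke it you already need the stability isomorphism $\hoz{2}{\specl{n}{F}}\cong\hoz{2}{\spcl{F}}$ for $n\geq 3$ and the triviality of the $\gnl{F}$-conjugation action on the stable group, neither of which is free. The paper deals with exactly this point by citation (\cite{hutchinson:mat}, p.\ 199), so you should either cite such a reference or give the stabilisation/conjugation argument explicitly. Note also that Matsumoto's theorem is not needed for the structure of your argument: the splitting of $\det$ together with (a)--(b) already identifies $\ho{0}{F^\times}{\hoz{2}{\specl{n}{F}}}\cong\milk{2}{F}$ via $\ee$ for all $n\geq 2$; the hypothesis $n\geq 3$ enters only through the triviality of the action.
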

\begin{proof}\ 

\begin{enumerate}

\item[(a)] \cite{sus:homgln}, Theorem 3.4 (c)
\item[(b)] \cite{sus:homgln}, Theorem 3.4 (d)
\item[(c)] 
Let $\rho_k=\rho_{k,n}:F^\times\to T_n$ denote the inclusion in the $k$th factor. 
Now, writing the operation in $\bigwedge^p T_n$ as addition, we have 
\[
(a_{1,1},\ldots,a_{1,n})\wedge \cdots \wedge (a_{p,1},\ldots,a_{p,n})=\sum_{1\leq k_1,\ldots,k_p\leq n}
\rho_{k_1}(a_{1,k_1})\wedge\cdots\wedge\rho_{k_p}(a_{p,k_p}).
\]

It follows from Corollary 2.7.2 of \cite{sus:homgln} that 
the image of $\rho_1(a_1)\wedge \cdots \wedge \rho_p(a_p)$ is $\{ a_1,\ldots,a_p\}$. Now if $J=(j_1,\ldots,j_p)$ is a $p$-tuple of distinct points of 
$\{ 1,\ldots,n\}$ there exists a permutation sending $j_i$ to $i$. The corresponding permutation on the factors of $T_n$ can be realised 
as conjugation by a permutation matrix in $\genl{n}{F}$. Since the induced automorphism on the integral homology of $\genl{n}{F}$ 
is trivial, it follows that the image of
$\rho_{j_1}(a_1)\wedge \cdots \wedge \rho_{j_p}(a_p)$ is again $\{ a_1,\ldots,a_p\}$.

On the other hand, if $k_1,\ldots,k_p$ are a set of natural numbers smaller than $n$ some two of which are equal, then the image of 
  $\rho_{k_1}(a_1)\wedge \cdots \wedge \rho_{k_p}(a_p)$ lies in the image of $\hoz{p}{\genl{p-1}{F}}$ and so is $0$.

\item[(d)] The Hochschild-Serre spectral sequence of the extension\\
 $1\to \specl{n}{F}\to\genl{n}{F}\to F^\times\to 1$ (see the next section) gives,
for $n\geq 2$, a split exact sequence
\[
0\to \ho{0}{F^\times}{\hoz{2}{\specl{n}{F}}}\to\hoz{2}{\genl{n}{F}}\to \hoz{2}{F^\times}\to 0.
\]
Since the map $\hoz{2}{\genl{1}{F}}\to\hoz{2}{F^\times}$ induced by the determinant is trivially an isomorphism, it follows from the results 
just cited
 that $\epsilon$ induces an isomorphism $\ho{0}{F^\times}{\hoz{2}{\specl{n}{F}}}\cong \milk{2}{F}$ for all $n\geq 2$. 

It can be shown that, for $n\geq 3$, $F^\times$ acts trivially on $\hoz{2}{\specl{n}{F}}$ (see for example Hutchinson, 
\cite{hutchinson:mat}, p199) and thus 
$\ho{0}{F^\times}{\hoz{2}{\specl{n}{F}}}=\hoz{2}{\specl{n}{F}}$ in this case.
\end{enumerate}
\end{proof}
\begin{rem}
On the other hand, the action of $F^\times$ on $\hoz{2}{\specl{2}{F}}$ is not generally trivial. In fact a result of 
Suslin (\cite{sus:tors}, Appendix ) 
essentially states that there is a natural isomorphism $\hoz{2}{\specl{2}{F}}\cong \mwk{2}{F}$ for which the 
diagram
\begin{eqnarray*}
\xymatrix{
\hoz{2}{\specl{2}{F}}\ar[r]^-{\cong}\ar[d]
&
\mwk{2}{F}\ar[d]\\
\hoz{2}{\specl{3}{F}}\ar[r]^-{\cong}
&
\milk{2}{F}
}
\end{eqnarray*}
commutes (for more details see also Mazzoleni, \cite{mazz:sus} or Hutchinson and Tao, \cite{hutchinson:tao}). 
It  follows that the kernel of $\hoz{2}{\specl{2}{F}}\to
\hoz{2}{\specl{3}{F}}$ is isomorphic to $I^3(F)$ and this group measures the nontriviality of the action of $F^\times$ on the first group. 
\end{rem}

\begin{rem}
For a different (but related) interpretation of Milnor $K$-theory in terms of the homology of orthogonal groups see Cathelineau, 
\cite{cath:milnor}.
\end{rem}

We also require the following lemma, essentially due to Suslin, in section \ref{sec:stab} below:

If $G$ is a subgroup of $\genl{r}{F}$, let ${G}^0$ denote $G\cap\specl{r}{F}$.

\begin{lem}\label{lem:tb}
Let $G_1\subset \genl{n}{F}$ and $G_2\subset \genl{m}{F}$ be subgroups both of which contain the group, $F^\times$, of scalar matrices. 
Let $M\subset\mat{n}{m}{F}$ be a vector subspace with the property that $G_1M=M=MG_2$. Then the  embedding of groups 
\[
{\begin{pmatrix}
G_1&0\\
0&G_2
\end{pmatrix}}^0
\to
\begin{pmatrix}
G_1&M\\
0&G_2
\end{pmatrix}^0
\] 
induces an isomorphism on integral homology.
\end{lem}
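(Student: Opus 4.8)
The plan is to realize $P := {\begin{pmatrix} G_1 & M\\ 0 & G_2\end{pmatrix}}^0$ as a split extension of $H := {\begin{pmatrix} G_1 & 0\\ 0 & G_2\end{pmatrix}}^0$ by the additive group $M$, and then run the Hochschild-Serre spectral sequence of that extension. The matrices $\begin{pmatrix} I_n & X\\ 0 & I_m\end{pmatrix}$, $X\in M$, form a subgroup of $P$ isomorphic to $(M,+)$, and the hypothesis $G_1M=M=MG_2$ makes it normal in $P$: conjugation by $\begin{pmatrix} A & Y\\ 0 & B\end{pmatrix}\in P$ sends it to $\begin{pmatrix} I_n & AXB^{-1}\\ 0 & I_m\end{pmatrix}$ with $AXB^{-1}\in M$. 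The block-diagonal embedding of $H$ into $P$ — the map in the statement — is a section of the quotient map $P\to H$, so we have a split extension $1\to M\to P\to H\to 1$. Its spectral sequence reads $E^2_{p,q}=\ho{p}{H}{\ho{q}{M}{\Z}}\Longrightarrow\ho{p+q}{P}{\Z}$, with $H$ acting on $\ho{q}{M}{\Z}$ via the conjugation action $X\mapsto AXB^{-1}$. If $E^2_{p,q}=0$ for all $p$ and all $q\geq 1$, then $\ho{p}{P}{\Z}=E^\infty_{p,0}=E^2_{p,0}=\ho{p}{H}{\Z}$ via the edge map induced by $P\to H$; since this edge map is a retraction of $\hoz{p}{H}\to\hoz{p}{P}$, the latter is then an isomorphism, which is what we want. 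So everything comes down to the vanishing of $\ho{p}{H}{\ho{q}{M}{\Z}}$ for $q\geq 1$.

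For this the key is a ``scalar'' central element of $H$. Given $\lambda\in F^\times$, set $g_\lambda:=\begin{pmatrix}\lambda^m I_n & 0\\ 0 & \lambda^{-n}I_m\end{pmatrix}$. Its determinant is $1$ and its diagonal blocks are scalar matrices lying in $G_1$ and $G_2$, so $g_\lambda\in H$; since scalar matrices are central, $g_\lambda$ lies in the centre of $H$; and conjugation by $g_\lambda$ acts on $M$ as multiplication by $\lambda^{m+n}$. Fix $q\geq 1$ and write $A:=\ho{q}{M}{\Z}$. The element $g_\lambda$ acts on the $\GR{\Z}{H}$-module $A$ through the endomorphism $\psi_\lambda$ of $A$ induced by multiplication by $\lambda^{m+n}$ on $M$; as multiplication by a scalar commutes with every map $X\mapsto AXB^{-1}$, the map $\psi_\lambda$ is $\GR{\Z}{H}$-linear. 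Because $g_\lambda$ is central in $H$, the standard fact that inner automorphisms act trivially on group homology (see \cite{brown:coh}) tells us that $\psi_\lambda$ induces the identity on $\ho{p}{H}{A}$ for all $p$. Consequently, if $\lambda$ can be chosen so that $\psi_\lambda-\mathrm{id}_A$ is an \emph{automorphism} of $A$, then $\psi_\lambda-\mathrm{id}_A$ induces, on the one hand, an automorphism of $\ho{p}{H}{A}$, and on the other hand (by additivity of the functor $\ho{p}{H}{-}$) the zero map; hence $\ho{p}{H}{A}=0$.

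Thus the crux is to choose $\lambda\in F^\times$ making $\psi_\lambda-\mathrm{id}_A$ bijective, and this is where $\mathrm{char}(F)$ enters. If $\mathrm{char}(F)=0$, then $M$ is a $\Q$-vector space, so for $q\geq 1$ the group $A=\ho{q}{M}{\Z}\cong\bigwedge^q_{\Z}M$ is again a $\Q$-vector space; taking the integer $\lambda=2$, one finds that $\psi_\lambda$ is multiplication by the integer $(2^{m+n})^q$, so $\psi_\lambda-\mathrm{id}_A$ is multiplication by the nonzero integer $(2^{m+n})^q-1$ and is therefore invertible on $A$. If $\mathrm{char}(F)=p>0$, one first checks that $\ho{q}{M}{\Z}$ is annihilated by $p$ for every $q\geq 1$ (reduce to $M$ finite-dimensional and apply the K\"unneth formula), and then exploits the natural weight decomposition $\ho{q}{M}{\Z}=\bigoplus_{1\leq w\leq q}A_w$ — functorial in the $F$-vector space $M$, hence stable under the action of $H$ — on which the operator induced by multiplication by $c\in F^\times$ acts ``with weight $w$'' on the summand $A_w$; choosing $\lambda\in F^\times$ of multiplicative order greater than $q(m+n)$, which is possible since $F$ is infinite, makes $c:=\lambda^{m+n}$ satisfy $c^w\neq 1$ for $1\leq w\leq q$, and this forces $\psi_\lambda-\mathrm{id}_A$ to be invertible on every $A_w$, hence on $A$.

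The only genuinely delicate point — and the place I expect the real work to be — is this last step in positive characteristic: setting up the weight decomposition of $\ho{\bullet}{M}{\Z}$ for a vector space $M$ over $F$ and verifying invertibility of $\psi_\lambda-\mathrm{id}$ on its graded pieces. This is the portion of the argument that is ``essentially due to Suslin'', and one may invoke his homology-stability papers for it rather than reproducing it.
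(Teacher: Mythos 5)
Your overall skeleton is the right one, and in fact it is the standard argument underlying the sources the paper itself quotes for this lemma (the paper gives no internal proof: it simply cites Lemma 9 of \cite{hutchinson:mat}, a variant of Lemma 1.8 of \cite{sus:tors}). The split extension $1\to M\to P\to H\to 1$, the Hochschild--Serre spectral sequence, the central element $g_\lambda$ with both blocks scalar and determinant $1$, and the ``centre kills'' reduction to showing that $\psi_\lambda-\mathrm{id}$ is invertible on $\ho{q}{M}{\Z}$ for $q\geq 1$ are all correct, and your characteristic-zero case is complete.

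The gap is in the positive-characteristic step, and it is not merely a detail you may wave at: the criterion you state is false. The weight pieces of $\ho{q}{M}{\Z}$ (exterior and divided powers of $M$ over $\F{p}$) carry only an $\F{p}$-structure, and the operator induced by scalar multiplication by $c\in F^\times$ is a Frobenius-twisted tensor construction, not multiplication by the field element $c^w$; hence ``$c^w\neq 1$ in $F$ for $1\leq w\leq q$'' does not force $\psi_\lambda-\mathrm{id}$ to be invertible. Concretely, let $p=2$, let $F$ be any infinite field of characteristic $2$ containing $\F{4}$, take $M$ containing an $F$-line, $q=2$ (so $\ho{2}{M}{\Z}\cong\bigwedge^2_{\F{2}}M$), and let $c=\omega$ be a generator of $\F{4}^\times$, so $c^2\neq 1$. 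On the stable subspace $\bigwedge^2_{\F{2}}(\F{4})\subseteq\bigwedge^2_{\F{2}}(M)$ the map induced by $c$ is multiplication by $\det{}_{\F{2}}(\omega)=N_{\F{4}/\F{2}}(\omega)=\omega^3=1$, i.e.\ the identity, so $\psi-\mathrm{id}$ has nonzero kernel although $c^2\neq1$; and choosing $\lambda$ of large multiplicative order does not repair this, since on $\bigwedge^2_{\F{2}}$ the correct requirement is of the shape $c^{p^i+p^j}\neq 1$ for all $i<j$ (with analogous Frobenius-twisted conditions on the divided-power constituents in higher degrees), not $c^w\neq1$. So the existence of a suitable $\lambda$ in characteristic $p$ --- exactly the step you defer --- is where the real content of the cited lemmas lies; as a self-contained proof you must either quote Suslin's Lemma 1.8 / Hutchinson's Lemma 9 outright (as the paper does) or replace your weight argument by the genuine analysis of the $F^\times$-action on $\ho{q}{M}{\Z}$ in characteristic $p$.
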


\begin{proof} This is Lemma 9 of \cite{hutchinson:mat}, which is a modified version of Lemma 1.8 of \cite{sus:tors}.
\end{proof}
\section{The Hochschild-Serre spectral sequences}
 For all $n\geq 2$, the short exact sequences 
 \begin{eqnarray*}
 \xymatrix{1\ar[r]&\specl{n}{F}\ar[r]&\genl{n}{F}\ar[r]^-{\det}&F^{\times}\ar[r]& 1}
 \end{eqnarray*}
 give Hochschild-Serre spectral sequences of the form
 \[
E^2_{p,q}(n)=\ho{p}{F^\times}{\hoz{q}{\specl{n}{F}}}\Rightarrow \hoz{p+q}{\genl{n}{F}}
 \]
 with differentials 
 \[
 d^r:E^r_{p,q}\to E^r_{p-r,q+r-1}.
 \]
The edge maps 
\[
D_p:\hoz{p}{\genl{n}{F}}\to E^\infty_{p,0} \subset E^2_{p,0}=\hoz{p}{F^\times}
\]
 are the maps induced by $\det: \genl{n}{F}\to F^\times$, and thus are surjective since $\det$ has a splitting. Thus 
 \[
 E^2_{p,0}=E^3_{p,0}=\cdots = E^\infty_{p,0} \mbox{ for all } p
 \]
 and the differentials $d^r$ leaving $E^r_{p,0}$ are zero for all $r\geq 2$.
 
 Of course, the terms $E^r_{p,1}$ are  zero for all $r\geq 2,p\geq 0$ since $\hoz{1}{\specl{n}{F}}=0$.

We will let  
\[
\barhoz{q}{\specl{n}{F}}:= E^\infty_{0,q} = \image{\hoz{q}{\specl{n}{F}}\to \hoz{q}{\genl{n}{F}}}.
\]

 Thus the $E^\infty$-term of the $n$th spectral sequence has the form
 
 \begin{eqnarray*}
{ 
 \xymatrix{
 \barhoz{4}{\specl{n}{F}}
 &\cdots &\cdots &\cdots &\cdots\\
 \barhoz{3}{\specl{n}{F}}
 &E^\infty_{1,3}(n)&\cdots &\cdots&\cdots \\
 \barhoz{2}{\specl{n}{F}}
 &
 \ho{1}{F^\times}{\hoz{2}{\specl{n}{F}}}
 &E^\infty_{2,2}(n) &\cdots&\cdots \\
 0&0&0&0&0\\
 \Z
 &{F^\times}
 &\hoz{2}{F^\times}
 &\hoz{3}{F^\times}
 &\hoz{4}{F^\times}\\
 }
} 
\end{eqnarray*}

The spectral sequence thus determines a filtration on $\hoz{p}{\genl{n}{F}}$, with 
\begin{eqnarray*}
 \filt{p}{\hoz{p}{\genl{n}{F}}} & = & \hoz{p}{\genl{n}{F}}  \\ 
\filt{p-1}{\hoz{p}{\genl{n}{F}}}&=&\filt{p-2}{\hoz{p}{\genl{n}{F}}}=\ker{D_p}\\
  \filt{0}{\hoz{p}{\genl{n}{F}}}&=&\barhoz{p}{\specl{n}{F}}=E^\infty_{0,p}.\\
\end{eqnarray*}

By Theorem \ref{thm:suslin} (1), the natural maps $\hoz{p}{\genl{n}{F}}\to\hoz{p}{\genl{n+1}{F}}$ are isomorphisms for $n\geq p$. 
Since these maps are compatible with the corresponding maps of Hochschild-Serre spectral sequences, 
it follows that the induced maps $E^\infty_{p,q}(n)\to E^\infty_{p,q}(n+1)$ are isomorphisms if $p+q\leq n$. 

In particular, the maps $\barhoz{3}{\specl{n}{F}}\to\barhoz{3}{\specl{n+1}{F}}$ are isomorphisms for all $n\geq 3$.

Furthermore, it follows from the remarks above that
 the maps $E^r_{p,q}(n)\to E^r_{p,q}(n+1)$ are isomorphisms for all $r\geq 2$ and $n\geq 1$ if 
$q\in\{ 0,1\}$.

Observe that the only possible nonzero differential arriving at the $(0,3)$ position is 
\[
d^2=d^2_{2,2}(n):E^2_{2,2}=\ho{2}{F^\times}{\hoz{2}{\specl{n}{F}}}\to E^2_{0,3}=\ho{0}{F^\times}{\hoz{3}{\specl{n}{F}}}
\]
and thus for all $n$
 \[
 \barhoz{3}{\specl{n}{F}}=
 \frac{\ho{0}{F^\times}{\hoz{3}{\specl{n}{F}}}}{\image{d^2}}.
 \]

 \begin{lem}\label{lem:d2}
 For all $n\geq 3$, $E^\infty_{0,3}(n)=\barhoz{3}{\specl{n}{F}}=
 \ho{0}{F^\times}{\hoz{3}{\specl{n}{F}}}$.
 \end{lem}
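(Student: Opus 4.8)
The plan is to show that the only differential that can arrive at the $(0,3)$ spot, namely $d^2=d^2_{2,2}(n)\colon E^2_{2,2}(n)\to E^2_{0,3}(n)$, vanishes for every $n\geq 3$. In view of the identity $\barhoz{3}{\specl{n}{F}}=\ho{0}{F^\times}{\hoz{3}{\specl{n}{F}}}/\image{d^2}$ recorded just above, this gives the lemma.

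The main tool is a comparison with a trivial (direct product) extension. For $m\geq 2$ the block-diagonal map $(B,a)\mapsto\left(\begin{smallmatrix}B&0\\0&a\end{smallmatrix}\right)$ embeds $\specl{m}{F}\times F^\times$ into $\genl{m+1}{F}$; the determinant of the image is $a$, so this embedding is compatible with the projections onto $F^\times$, and it carries $\specl{m}{F}\times\{1\}$ into $\specl{m+1}{F}$. One thus gets a morphism of short exact sequences from $1\to\specl{m}{F}\to\specl{m}{F}\times F^\times\to F^\times\to1$ to $1\to\specl{m+1}{F}\to\genl{m+1}{F}\to F^\times\to1$, hence a morphism of Hochschild-Serre spectral sequences. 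The spectral sequence of a direct product collapses at $E^2$ (as one sees from the K\"unneth theorem), so all its differentials vanish, and by naturality $d^2_{2,2}(m+1)$ annihilates the image of the induced map $\ho{2}{F^\times}{\hoz{2}{\specl{m}{F}}}\to\ho{2}{F^\times}{\hoz{2}{\specl{m+1}{F}}}=E^2_{2,2}(m+1)$. Here the $F^\times$-action on $\hoz{2}{\specl{m}{F}}$ coming from the first extension is trivial (the factor $F^\times$ commutes with the $\specl{m}{F}$-block), the $F^\times$-action on $\hoz{2}{\specl{m+1}{F}}$ is trivial once $m+1\geq3$ by Theorem \ref{thm:suslin}(d), and the relevant map of coefficients is just the stabilisation map $\hoz{2}{\specl{m}{F}}\to\hoz{2}{\specl{m+1}{F}}$.

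For $n\geq4$ this settles the matter at once: with $m=n-1\geq3$, Theorem \ref{thm:suslin}(d) identifies $\hoz{2}{\specl{n-1}{F}}\to\hoz{2}{\specl{n}{F}}$ with the identity of $\milk{2}{F}$, so the induced map on $E^2_{2,2}$ is an isomorphism and hence $d^2_{2,2}(n)=0$.

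The case $n=3$ is the crux. Now $m=2$, and by the Remark following Theorem \ref{thm:suslin} the map $\hoz{2}{\specl{2}{F}}\to\hoz{2}{\specl{3}{F}}$ is the canonical surjection $\mwk{2}{F}\to\milk{2}{F}$, whose kernel is $I^3(F)$; so the argument above only yields the vanishing of $d^2_{2,2}(3)$ on the image of $\ho{2}{F^\times}{\mwk{2}{F}}$. Since $(-)\otimes\milk{2}{F}$ is right exact and the universal-coefficient sequence $0\to\hoz{2}{F^\times}\otimes\milk{2}{F}\to\ho{2}{F^\times}{\milk{2}{F}}\to\mathrm{Tor}_1^{\Z}(F^\times,\milk{2}{F})\to0$ is natural in the coefficients, that image contains the subgroup $\hoz{2}{F^\times}\otimes\milk{2}{F}$; hence $d^2_{2,2}(3)$ kills this subgroup and therefore factors through a homomorphism $\mathrm{Tor}_1^{\Z}(F^\times,\milk{2}{F})\to\ho{0}{F^\times}{\hoz{3}{\specl{3}{F}}}$. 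The remaining, and genuinely harder, step is to prove that this last map is zero; I expect this to be the main obstacle. The natural lines of attack are either to analyse these Tor-classes (each supported on a root of unity in $F^\times$ together with a torsion class of $\milk{2}{F}\cong K_2(F)$) directly at the level of the bar complex, or to combine naturality in $n$ with the vanishing of $d^2_{2,2}(4)$ just established and the stabilisation isomorphism $\barhoz{3}{\specl{3}{F}}\cong\barhoz{3}{\specl{4}{F}}$.
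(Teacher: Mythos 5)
Your reduction to the vanishing of $d^2_{2,2}(n)$ and your treatment of $n\geq 4$ are fine: comparing with the split extension $1\to\specl{n-1}{F}\to\specl{n-1}{F}\times F^\times\to F^\times\to 1$ via the block embedding is a legitimate variant of what the paper does (the paper instead stabilises, using split injectivity of $\hoz{p}{\spcl{F}}\to\hoz{p}{\gnl{F}}$ and the isomorphisms $E^\infty_{p,q}(n)\cong E^\infty_{p,q}$ for $n\geq p+q$); in both versions the point is that $\hoz{2}{\specl{n-1}{F}}\to\hoz{2}{\specl{n}{F}}$ is an isomorphism onto $\milk{2}{F}$, with trivial actions, once $n-1\geq 3$.

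But the real content of the lemma is the case $n=3$, and there your argument stops short: you only show that $d^2_{2,2}(3)$ annihilates $\hoz{2}{F^\times}\otimes\milk{2}{F}$ and hence descends to a map out of $\mathrm{Tor}_1^{\Z}(F^\times,\milk{2}{F})$, and you leave the vanishing of that residual map unproved. Since this Tor group is nonzero in general, that is a genuine gap, not a routine verification; and your second proposed remedy is essentially circular, because $\barhoz{3}{\specl{3}{F}}=\ho{0}{F^\times}{\hoz{3}{\specl{3}{F}}}/\image{d^2}$ together with $E^\infty_{0,3}(3)\cong E^\infty_{0,3}(4)=\ho{0}{F^\times}{\hoz{3}{\specl{4}{F}}}$ shows that $d^2_{2,2}(3)=0$ is \emph{equivalent} to injectivity of the stabilisation map $\ho{0}{F^\times}{\hoz{3}{\specl{3}{F}}}\to\ho{0}{F^\times}{\hoz{3}{\specl{4}{F}}}$, which is precisely the kind of degree-three injectivity that is not yet available. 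The paper's mechanism for $n=3$ is different and does not pass through the coefficient sequence $I^3(F)\to\mwk{2}{F}\to\milk{2}{F}$ at all: it works one homological degree higher, using Suslin's theorem that $\coker{\hoz{4}{\genl{3}{F}}\to\hoz{4}{\genl{4}{F}}}\cong\milk{4}{F}$ together with the surjectivity of the product map $\hoz{2}{\specl{2}{F}}\times\hoz{2}{\specl{2}{F}}\to\hoz{4}{\genl{4}{F}}\to\milk{4}{F}$; since these product classes lie in $\filt{0}{\hoz{4}{\genl{4}{F}}}$, a filtration argument forces $E^\infty_{2,2}(3)\to E^\infty_{2,2}(4)=\ho{2}{F^\times}{\milk{2}{F}}$ to be surjective, and because $E^\infty_{2,2}(3)=\ker{d^2_{2,2}(3)}$ sits inside $E^2_{2,2}(3)\cong E^2_{2,2}(4)$ this forces $d^2_{2,2}(3)=0$. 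Some input of this kind, beyond the $E^2$-level comparison you use, is needed to close your argument.
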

 \begin{proof}
By the preceding remarks, this amounts to showing that $d^2_{2,2}(n)=0$ for all $n\geq 3$.

 The natural map $\hoz{p}{\spcl{F}}\to\hoz{p}{\gnl{F}}$ is split injective. This follows from letting $n$ go to infinity in the commutative triangle
 \begin{eqnarray*}
 \xymatrix{\hoz{p}{\specl{n}{F}}\ar[rr]\ar[rd]&&\hoz{p}{\genl{n}{F}}\ar[ld]^{\alpha_p}\\
 &\hoz{p}{\specl{n+1}{F}}&\\
 }
 \end{eqnarray*}
 where $\alpha_p$ is the map induced on homology by the embedding
 \[
 \alpha:\genl{n}{F}\to\specl{n+1}{F},\quad 
 A\mapsto 
 \begin{pmatrix}
 A&0\\
 0&\det{A}^{-1}\\
 \end{pmatrix}.
 \]
 It follows that the differential 
 \[
 d^2:E^2_{2,2}=\ho{2}{F^\times}{\hoz{2}{\spcl{F}}}\to E^2_{0,3}=\ho{0}{F^\times}{\hoz{3}{\spcl{F}}}.
 \]
 in the spectral sequence of the extension
\(
1\to \spcl{F}\to\gnl{F}\to F^\times\to 1
\) 
is zero, since $E^\infty_{0,3}=E^2_{0,3}=\hoz{3}{\spcl{F}}$. 
Furthermore, it follows that $E^\infty_{2,2}= \ker{d^2}=E^2_{2,2}$ since no other nonzero differentials arrive at or leave this position.

By the remarks above, the natural maps $E^\infty_{p,q}(n)\to E^\infty_{p,q}$ are isomorphisms if either $n\geq p+q$ or $q\leq 1$.

In particular we have, for $n\geq 4$, 
\[
\ho{2}{F^\times}{\milk{2}{F}}=E^2_{2,2}(n)
\supset E^\infty_{2,2}(n)\cong E^\infty_{2,2}=\ho{2}{F^\times}{\milk{2}{F}}
\]
(since $\hoz{2}{\specl{n}{F}}\cong\hoz{2}{\spcl{F}}\cong \milk{2}{F}$ by \ref{thm:suslin} (4)) 
so that $E^2_{2,2}(n)=E^\infty_{2,2}(n)$ and hence $d^2_{2,2}(n)=0$ in the case 
$n\geq 4$. 

For $n=3$ the cokernel of the map $\hoz{4}{\genl{3}{F}}\to \hoz{4}{\genl{4}{F}}$ is 
$\milk{4}{F}$ by \ref{thm:suslin} (2). By the remarks above, 
the cokernel of the map $\filt{2}{\hoz{4}{\genl{3}{F}}}\to \filt{2}{\hoz{4}{\genl{4}{F}}}$ is also $\milk{4}{F}$.


However, since the composite $\hoz{2}{\specl{2}{F}}\to\hoz{2}{\genl{2}{F}}\to \milk{2}{F}$ is surjective, it follows, taking products, that 
the map 
\[
\hoz{2}{\specl{2}{F}}\times\hoz{2}{\specl{2}{F}}\to\hoz{4}{\specl{4}{F}}\to\hoz{4}{\genl{4}{F}}\to\milk{4}{F}
\]
is composite, and hence that the cokernel of the map  $\filt{0}{\hoz{4}{\genl{3}{F}}}\to 
\filt{0}{\hoz{4}{\genl{4}{F}}}$ maps onto $\milk{4}{F}$. Thus the natural map 
$E^\infty_{2,2}(3)\to E^\infty_{2,2}(4)=\ho{2}{F^\times}{\milk{2}{F}}$ is surjective. 
As in the case $n\geq 4$, this forces $d^2_{2,2}(3)=0$.

 \end{proof}
 
For each $n$ the spectral sequence gives  a natural short exact sequence
\[
0\to \barhoz{3}{\specl{n}{F}}\to \filt{1}{\hoz{3}{\genl{n}{F}}}\to \ho{1}{F^\times}{\hoz{2}{\specl{n}{F}}}\to 0.
\]

Now the map of extensions
\begin{eqnarray*}
 \xymatrix{
 1\ar[r]&\specl{2}{F}\ar[d]\ar[r]&\genl{2}{F}\ar[d]\ar[r]^-{\det}&F^{\times}\ar[r]\ar[d]^{=}& 1\\
1\ar[r]&\specl{3}{F}\ar[r]&\genl{3}{F}\ar[r]^-\det& F^{\times}\ar[r]& 1}
 \end{eqnarray*}
induces a map of corresponding Hochschild-Serre spectral sequences and hence a commutative diagram with exact rows and column:
\begin{eqnarray}\label{diag:23}
\xymatrix{
&0\ar[r]&\barhoz{3}{\specl{2}{F}}\ar[r]\ar[d]&\filt{1}{\hoz{3}{\genl{2}{F}}}\ar[r]\ar[d]&\ho{1}{F^\times}{\hoz{2}{\specl{2}{F}}}
\ar[r]\ar[d]&0\\
&0\ar[r]&\barhoz{3}{\specl{3}{F}}\ar[r]&\filt{1}{\hoz{3}{\genl{3}{F}}}\ar[r]\ar[d]&\ho{1}{F^\times}{\hoz{2}{\specl{3}{F}}}\ar[r]&0\\
&&&\milk{3}{F}\ar[d]&&\\
&&&0&&\\
}
\end{eqnarray}

Note that $\ho{1}{F^\times}{\hoz{2}{\specl{3}{F}}}=\ho{1}{F^\times}{\milk{2}{F}}=F^\times\otimes\milk{2}{F}$, and hence there is a natural 
multiplication map $\mu:\ho{1}{F^\times}{\hoz{2}{\specl{3}{F}}}\to \milk{3}{F}$.

\begin{thm}\label{thm:main}
The natural inclusion $\specl{2}{F}\to\specl{3}{F}$ induces a short exact sequence 
\begin{eqnarray*}
\xymatrix{
0\ar[r]&\ho{1}{F^\times}{\hoz{2}{\specl{2}{F}}}\ar[r]&\ho{1}{F^\times}{\hoz{2}{\specl{3}{F}}}
\ar[r]^-{\mu}&\milkt{3}{F}\ar[r]&0\\
}
\end{eqnarray*}
\end{thm}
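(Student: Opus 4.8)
The plan is to transport the statement into Milnor--Witt $K$-theory and then run a long exact homology sequence. By the theorem of Suslin recalled in the Remark following Theorem \ref{thm:suslin} (and the references given there), the inclusion $\specl{2}{F}\to\specl{3}{F}$ induces on second homology a commutative square identifying $\hoz{2}{\specl{2}{F}}$ with $\mwk{2}{F}$ as $F^\times$-modules (for the Grothendieck--Witt action $\ffist{a}\cdot\xi=\eta[a]\xi$), identifying $\hoz{2}{\specl{3}{F}}$ with $\milk{2}{F}$ with the trivial action (by Theorem \ref{thm:suslin}(4), together with the triviality of the action for $n\geq 3$), and identifying the stabilisation map with the canonical projection $\pi\colon\mwk{2}{F}\to\milk{2}{F}$. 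Under these identifications the map of the theorem becomes the map $c\colon\ho{1}{F^\times}{\mwk{2}{F}}\to\ho{1}{F^\times}{\milk{2}{F}}=F^\times\otimes\milk{2}{F}$ induced by $\pi$, and $\mu$ becomes the reduction modulo $2$ of the multiplication map $F^\times\otimes\milk{2}{F}\to\milk{3}{F}$. Since $\ker{\pi}\cong I^3(F)$, carrying the induced action $\ffist{a}\cdot w=\pfist{a}\cdot w\in I^4(F)$, I would work with the long exact sequence in $\ho{\bullet}{F^\times}{-}$ attached to the short exact sequence of $F^\times$-modules
\[
0\longrightarrow I^3(F)\longrightarrow\mwk{2}{F}\xrightarrow{\ \pi\ }\milk{2}{F}\longrightarrow 0.
\]

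The second step is to read off the low-degree part of this sequence. It was shown in Section \ref{sec:background} that $\ho{0}{F^\times}{\mwk{2}{F}}\cong\milk{2}{F}$, with the isomorphism induced by $\pi$; hence $\ho{0}{F^\times}{I^3(F)}\to\ho{0}{F^\times}{\mwk{2}{F}}$ is zero and the connecting homomorphism
\[
\partial\colon F^\times\otimes\milk{2}{F}=\ho{1}{F^\times}{\milk{2}{F}}\longrightarrow\ho{0}{F^\times}{I^3(F)}=\frac{I^3(F)}{I^4(F)}=i^3(F)
\]
is surjective. The Milnor conjecture identifies $i^3(F)$ with $\milkt{3}{F}$, and a direct computation with the bar resolution --- lifting a symbol $\{b,c\}\in\milk{2}{F}$ to the Pfister form $\pfist{b,c}\in I^2(F)$ and applying $\ffist{a}$ --- shows that $\partial$ sends $a\otimes\{b,c\}$ to the class of $\pfist{a,b,c}$, that is, to the image of $\{a,b,c\}$ in $\milkt{3}{F}$. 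Hence $\partial=\mu$. Exactness of the long sequence now yields two of the three required facts: $\image{c}=\ker{\mu}$, and $\mu$ induces an isomorphism $\coker{c}\xrightarrow{\ \sim\ }\milkt{3}{F}$; equivalently, the asserted sequence is exact except possibly at its left-hand term.

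It remains to prove that $c$ is injective, which by exactness is equivalent to showing that the inclusion $I^3(F)\hookrightarrow\mwk{2}{F}$ induces the zero map $\ho{1}{F^\times}{I^3(F)}\to\ho{1}{F^\times}{\mwk{2}{F}}$. I expect this to be the main obstacle, and it is the step that really uses the structure theory of Milnor--Witt $K$-theory. The plan here is to compute $\ho{1}{F^\times}{\mwk{2}{F}}$ (and, along the way, $\ho{1}{F^\times}{I^n(F)}$ for $n=2,3$) by means of presentations: Morel's cartesian square $\mwk{2}{F}=\milk{2}{F}\times_{i^2(F)}I^2(F)$ and the short exact sequences $0\to I^{n+1}(F)\to I^n(F)\to i^n(F)\to 0$ organise the computation, while the presentation of $I^3(F)$ by Pfister forms subject to the relations of Lemma \ref{lem:huttao} is used to exhibit, for each $1$-cycle $\sum_i[a_i]\otimes w_i$ with $w_i\in I^3(F)$ and $\sum_i\pfist{a_i}\cdot w_i=0$, an explicit bounding $2$-chain once the coefficient module is enlarged from $I^3(F)$ to $\mwk{2}{F}$. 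The Steinberg-type relation (d) and the bilinearity relation (c) of Lemma \ref{lem:huttao} are what make the bounding chains available, and the Milnor conjecture is used throughout to pass between $i^\bullet(F)$ and $\milkt{\bullet}{F}$. Once this vanishing is in hand, the long exact sequence gives $\ker{c}=0$ and the theorem follows.
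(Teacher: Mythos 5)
Your reduction to Milnor--Witt $K$-theory and your treatment of exactness at the middle and right-hand terms are correct and are essentially what the paper does: both arguments pass to the sequence $0\to I^{3}(F)\to\mwk{2}{F}\to\milk{2}{F}\to 0$ of $\gr{\Z}{F^\times}$-modules, compute $\ho{0}{F^\times}{\mwk{2}{F}}\cong\milk{2}{F}$ and $\ho{0}{F^\times}{I^{3}(F)}\cong I^3/I^4\cong\milkt{3}{F}$, and identify the connecting map with multiplication modulo $2$ (your lift of $\{b,c\}$ should strictly be $[b][c]\in\mwk{2}{F}$ rather than $\pfist{b,c}$, but the outcome $\partial(a\otimes\{b,c\})=\pm\pfist{a,b,c}$ is right). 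The genuine gap is the left-hand exactness. You correctly isolate it as the vanishing of $\ho{1}{F^\times}{I^{3}(F)}\to\ho{1}{F^\times}{\mwk{2}{F}}$ and correctly flag it as the main obstacle, but what you offer for it --- ``exhibit, for each $1$-cycle $\sum_i[a_i]\otimes w_i$, an explicit bounding $2$-chain once the coefficients are enlarged to $\mwk{2}{F}$'' --- is a hope, not an argument: the $w_i$ are arbitrary elements of $I^3(F)$, not Pfister forms, and nothing in the proposal indicates how relations (c) and (d) of Lemma \ref{lem:huttao} would actually produce such chains.

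Moreover, the mechanism you are implicitly relying on does not transfer to $\mwk{2}{F}$. What makes Lemma \ref{lem:huttao} usable with coefficients in a power of $I$ is that the $\gr{\Z}{F^\times}$-action of $\ffist{a}$ there \emph{is} multiplication by $\pfist{a}$, so one may shuffle Pfister factors across the tensor sign: $\ffist{a_0}\otimes\pfist{a_1,\ldots}=\ffist{a_0}\ffist{a_1}\otimes\pfist{a_2,\ldots}$, which is what makes the assignment $\pfist{a_0,\ldots,a_{n+1}}\mapsto\ffist{a_0}\otimes\pfist{a_1,\ldots,a_{n+1}}$ well defined. In $\mwk{2}{F}$ the action of $\ffist{a}$ is multiplication by $\eta[a]$, not by $[a]$, so the analogous map $\pfist{a_0,a_1,a_2}\mapsto\ffist{a_0}\otimes[a_1][a_2]$ into $\aug{F^\times}\otimes_{\gr{\Z}{F^\times}}\mwk{2}{F}$ has no reason to respect the symmetry relation (b), and there is no evident way to bound cycles there directly. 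The paper's proof avoids $\ho{1}{F^\times}{\mwk{2}{F}}$ altogether: it first proves that the inclusion $I^{4}(F)\to I^{3}(F)$ induces the zero map on $\ho{1}{F^\times}{-}$ (statement $B(3)$), via the well-defined homomorphism $\phi\colon I^{5}(F)\to\aug{F^\times}\otimes_{\gr{\Z}{F^\times}}I^{3}(F)$, $\pfist{a_0,\ldots,a_4}\mapsto\ffist{a_0}\otimes\pfist{a_1,\ldots,a_4}$, through which the natural map $\aug{F^\times}\otimes I^{4}\to\aug{F^\times}\otimes I^{3}$ factors; this gives $\ho{1}{F^\times}{I^{3}(F)}\cong\ker{F^\times\otimes\milkt{3}{F}\to\milkt{4}{F}}$, and an explicit bar-resolution computation then shows that the connecting map $\ho{2}{F^\times}{\milk{2}{F}}\to\ho{1}{F^\times}{I^{3}(F)}$ hits the generators $a\otimes\{b,x_1,x_2\}+b\otimes\{a,x_1,x_2\}$ of that kernel, hence is surjective --- which, by the long exact sequence, is exactly the injectivity you need. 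To complete your proof you must supply an argument of this kind (or a new one) for the vanishing; as written, the decisive step is missing.
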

\begin{proof} 
Recall that there is a natural isomorphism of $\gr{\Z}{F^\times}$-modules 
 $\hoz{2}{\specl{2}{F}}\cong \mwk{2}{F}$ such that the map $\hoz{2}{\specl{2}{F}}\to\hoz{2}{\specl{3}{F}}$ corresponds to the 
natural surjection $\mwk{2}{F}\to\milk{2}{F}$.
 
 Let $I=I(F)$ be the augmentation ideal of the Grothendieck-Witt ring, $\gw{F}$, 
 of   $F$. For any $n\geq 0$, the exact sequence of $\gr{\Z}{F^\times}$-modules
 \[
 0\to I^{n+1}\to \mwk{n}{F}\to\milk{n}{F}\to 0
 \]
 gives an exact homology sequence
 \begin{eqnarray*}
 \ho{1}{F^\times}{\mwk{n}{F}}\to \ho{1}{F^\times}{\milk{n}{F}}\to \ho{0}{F^\times}{I^{n+1}}\to \ho{0}{F^\times}{\mwk{n}{F}}
\to\milk{n}{F}\to 0.
 \end{eqnarray*}

Now,
\begin{eqnarray*}
\ho{0}{F^\times}{\mwk{n}{F}}=\frac{\mwk{n}{F}}{\aug{F^\times}\cdot \mwk{n}{F}}
= \frac{\mwk{n}{F}}{I^{n+1}}
= \milk{n}{F}
\end{eqnarray*}
while
\begin{eqnarray*}
\ho{0}{F^\times}{I^{n+1}}=\frac{I^{n+1}}{\aug{F^\times}\cdot I^{n+1}}
= \frac{I^{n+1}}{I^{n+2}}\cong \milkt{n+1}{F}.
\end{eqnarray*}

Thus for all $n\geq 0$, there is an exact sequence
 \begin{eqnarray*}
 \ho{1}{F^\times}{\mwk{n}{F}}\to \ho{1}{F^\times}{\milk{n}{F}}\to \milkt{n+1}{F}\to 0.
 \end{eqnarray*}
 
 Let $A(n)$ be the statement that the leftmost arrow is an injection. Thus we require, in particular, to prove the statement $A(2)$. 

For $n\geq 0$, let $B(n)$ be the statement that the inclusion $I^{n+1}\to I^n$ induces the zero map $\ho{1}{F^\times}{I^{n+1}}\to \ho{1}{F^\times}{I^n}$.

We first show that $B(n+1)\imp A(n)$ for all $n\geq 1$:

 Suppose that $B(n+1)$ holds. Then the short exact sequence 
 \[
 0\to I^{n+2}\to I^{n+1}\to \milkt{n+1}{F}\to 0
 \]
 gives the exact homology sequence 
 \begin{eqnarray*}
 0\to \ho{1}{F^\times}{I^{n+1}}\to\ho{1}{F^\times}{\milkt{n+1}{F}}\to \milkt{n+2}{F}\to 0
 \end{eqnarray*}
(using the fact that $\ho{0}{F^\times}{I^{n}}= I^n/I^{n+1}=\milkt{n}{F}$.) 

Thus we have 
\begin{eqnarray*}
\ho{1}{F^\times}{I^{n+1}}& \cong & \ker{\ho{1}{F^\times}{\milkt{n+1}{F}}\to \milkt{n+2}{F}}\\
      & = & \ker{\mu_{n+2}:F^\times\otimes\milkt{n+1}{F}\to \milkt{n+2}{F}}
\end{eqnarray*}
 where $\mu  = \mu_{n+2}$ denotes the natural multiplication.
 
 On the other hand, from the long exact homology sequence associated to \\
 $0\to I^{n+1}\to \mwk{n}{F}\to \milk{n}{F}\to 0$, $A(n)$ is equivalent to the surjectivity of the connecting homomorphism
 \(
 \delta_n:\ho{2}{F^\times}{\milk{n}{F}}\to\ho{1}{F^\times}{I^{n+1}}.
 \)
 
 Let $\rho_n$ denote the composite $\ho{2}{F^\times}{\milk{n}{F}}\to\ho{1}{F^\times}{I^{n+1}}\to 
 \ho{1}{F^\times}{\milkt{n+1}{F}}=F^\times\otimes\milkt{n+1}{F}$.
 Then $A(n)$ holds if and only if the image of $\rho_n$ is the kernel of $\mu_{n+1}$. 
 
 Since $\milk{n}{F}$ is a trivial $\gr{\Z}{F^\times}$-module, there is a natural homomorphism
 \[
 \ext{2}{F^\times}\otimes\milk{n}{F}=\hoz{2}{F^\times}\otimes \milk{n}{F}\to \ho{2}{F^\times}{\milk{n}{F}}
 \]
 described at the level of the bar resolution by 
 \[
 (a\wedge b)\otimes z \mapsto \left([a|b]-[b|a]\right)\otimes z.
 \]
 
 We calculate the image of $(a\wedge b)\otimes \{ x_1,\ldots, x_n\}$ under $\delta_n$: As noted this is represented by the element 
 \[
 \left([a|b]-[b|a]\right)\otimes\{ x_1,\ldots, x_n\}\in B_2(F^\times)\otimes_{\gr{\Z}{F^\times}}\milk{n}{F}
 \]
 which lifts to 
  \[
 \left([a|b]-[b|a]\right)\otimes [x_1]\cdots [x_n]\in B_2(F^\times)\otimes_{\gr{\Z}{F^\times}}\mwk{n}{F}.
 \]
 This maps, by the boundary homomorphism, to the following element in $B_1(F^\times)\otimes\milk{n}{F}$:
 \begin{eqnarray*}
& \left([b]-[ab]+[a]\fgen{b}-[a]+[ba]-[b]\fgen{a}\right)\otimes [x_1]\cdots [x_n]\\
 = &\left([a]\ffist{b}-[b]\ffist{a}\right)\otimes [x_1]\cdots [x_n]\\
 =& [a]\otimes \ffist{b}[x_1]\cdots [x_n] - [b]\otimes \ffist{a}[x_1]\cdots [x_n].
 \end{eqnarray*}
Recall that the inclusion $I^{n+1}\to\mwk{n}{F}$ is given by 
\[
\pfist{a_0,\ldots,a_n}\mapsto \pfist{a_0}[a_1]\cdots [a_n] = \eta [a_0][a_1]\cdots [a_n].
\]

Thus 
\[
\delta_n\left( (a\wedge b)\otimes \{ x_1,\ldots, x_n\}\right) = [a]\otimes\pfist{b,x_1,\ldots,x_n}-[b]\otimes\pfist{a,x_1,\ldots,x_n}\in
\ho{1}{F^\times}{I^{n+1}}.
\] 
hence

\begin{eqnarray*}
\rho_n\left( (a\wedge b)\otimes \{ x_1,\ldots, x_n\}\right)& = & a\otimes\{ b,x_1,\ldots,x_n\}-b\otimes\{ a,x_1,\ldots,x_n\}\\
& = & a\otimes\{ b,x_1,\ldots,x_n\}+b\otimes\{ a,x_1,\ldots,x_n\}\in
F^\times\otimes\milkt{n+1}{F}.
\end{eqnarray*}

Since $\milk{n+2}{F}$ is naturally isomorphic to the $(n+2)$-nd tensor power of $F^\times$ modulo the two families of relations 
\begin{eqnarray*}
a_1\otimes \cdots a_{n+1}\otimes (1-a_{n+1})=0,\qquad a_i\in F^\times, a_{n+1}\not= 1\\
a_1\otimes \cdots a_i\otimes a_{i+1}\otimes \cdots \otimes a_{n+2}+a_1\otimes \cdots a_{i+1}\otimes a_i\otimes \cdots \otimes a_{n+2}, 
\qquad a_i\in F^\times  
\end{eqnarray*}
it easily follows that $\ker{F^\times\otimes \milk{n+1}{F}\to\milk{n+2}{F}}$ is generated by the elements 
\[
a\otimes\{ b,x_1,\ldots,x_n\}+b\otimes\{ a,x_1,\ldots,x_n\}.
\]

This shows that the image of $\rho_n$ is equal to the kernel of $\mu_{n+2}$, and thus that $A(n)$ holds.

Thus, to complete the proof of the theorem, we show that $B(n)$ holds for all $n\geq 1$ (and, in particular, that $B(3)$, 
and hence $A(2)$ holds).

For any $\gr{\Z}{G}$-module 
\[
\ho{1}{G}{M}=\ker{\aug{G}\otimes_{\gr{\Z}{G}}M\to \aug{G}\cdot M}.
\]
Thus
\[
\ho{1}{F^\times}{I^{n+1}}=\ker{\aug{F^\times}\otimes_{\gr{\Z}{F^\times}}I^{n+1}\to I^{n+2}}.
\]

From Lemma \ref{lem:huttao} it easily follows that there is a well-defined homomorphism (for $n\geq 1$)
\begin{eqnarray*}
\phi_n: I^{n+2}&\to&\aug{F^\times}\otimes_{\gr{\Z}{F^\times}}I^n,\\
 \pfist{a_0,\ldots,a_{n+1}}&\mapsto & \ffist{a_0}\otimes\pfist{a_1,\ldots,a_{n+1}}.
\end{eqnarray*}
The essential point here is that relations (a),(c) and (d) are clearly respected, while  in $\aug{F^\times}\otimes I^n$ we have
\begin{eqnarray*}
\ffist{a_0}\otimes\pfist{a_1,\ldots,a_{n+1}}& = & \ffist{a_0}\otimes \ffist{a_1}\cdot\pfist{a_2,\ldots,a_{n+1}}\\
  & = & \ffist{a_0}\ffist{a_1}\otimes\pfist{a_2,\ldots,a_{n+1}}\\
  & = & \ffist{a_1}\otimes \ffist{a_0}\cdot\pfist{a_2,\ldots,a_{n+1}}\\
  & = & \ffist{a_1}\otimes \pfist{a_0,a_2,\ldots,a_{n+1}}
\end{eqnarray*}
so that the relation (b) is also satisfied. 

Since the diagram
\begin{eqnarray*}
\xymatrix{
\aug{F^\times}\otimes_{\gr{\Z}{F^\times}} I^{n+1}\ar[rr]\ar[dr]
&&
I^{n+2}\ar[dl]_-{\phi_n}\\
&\aug{F^\times}\otimes_{\gr{\Z}{F^\times}}I^n&\\
}
\end{eqnarray*}
commutes, statement $B(n)$ now follows, and the theorem is proven.
 \end{proof}
\begin{lem}\label{lem:comm}
The square 
\begin{eqnarray*}
\xymatrix{
\filt{1}{\hoz{3}{\genl{3}{F}}}\ar[r]\ar[d]^{\ee}
&\ho{1}{F^\times}{\hoz{2}{\specl{3}{F}}}\ar[d]^{\mu}\\
\milk{3}{F}\ar[r]&\milkt{3}{F}\\
}
\end{eqnarray*}
commutes.
\end{lem}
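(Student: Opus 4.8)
The plan is to check that the two composites $\filt{1}{\hoz{3}{\genl{3}{F}}}\to\milkt{3}{F}$ agree on a convenient generating set. For $a\in F^\times$ and $\eta\in\hoz{2}{\specl{2}{F}}$, let $x(a,\eta)\in\hoz{3}{\genl{3}{F}}$ be the image of the cross product $[a]\times\eta\in\hoz{3}{F^\times\times\specl{2}{F}}$ under the block embedding $F^\times\times\specl{2}{F}\hookrightarrow\genl{3}{F}$, $(u,g)\mapsto\mathrm{diag}(u,g)$. On this subgroup $\det$ is the projection to $F^\times$, so $D_3(x(a,\eta))=0$ and hence $x(a,\eta)\in\filt{1}{\hoz{3}{\genl{3}{F}}}$. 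This embedding is a morphism from the split extension $1\to\specl{2}{F}\to F^\times\times\specl{2}{F}\to F^\times\to1$ to $1\to\specl{3}{F}\to\genl{3}{F}\to F^\times\to1$; working with the induced map of Hochschild--Serre spectral sequences (the source one being that of a direct product, so $E^2_{1,2}=F^\times\otimes\hoz{2}{\specl{2}{F}}$ and $[a]\times\eta$ has the evident class there) shows that the top arrow of the square sends $x(a,\eta)$ to $a\otimes\bar\eta\in F^\times\otimes\milk{2}{F}=\ho{1}{F^\times}{\hoz{2}{\specl{3}{F}}}$, where $\bar\eta$ is the image of $\eta$ in $\hoz{2}{\specl{3}{F}}\cong\milk{2}{F}$. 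Thus the clockwise composite sends $x(a,\eta)$ to $\mu(a\otimes\bar\eta)=\{a\}\cdot\bar\eta$ in $\milkt{3}{F}$.

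For the counter-clockwise composite, observe that $x(a,\eta)$ is also the image of $[a]\times\eta_2\in\hoz{3}{\genl{1}{F}\times\genl{2}{F}}$ under block sum, where $\eta_2$ is the image of $\eta$ in $\hoz{2}{\genl{2}{F}}$. By the multiplicativity underlying Theorem \ref{thm:suslin} --- namely, that $\ee$ induces an isomorphism of graded rings $H_\bullet(F)\cong\milk{\bullet}{F}$ with product coming from block sum and cross product --- we get $\ee(x(a,\eta))=\ee_{1,1}([a])\cdot\ee_{2,2}(\eta_2)=\{a\}\cdot\ee_{2,2}(\eta_2)$. Since the composite $\hoz{2}{\specl{2}{F}}\to\hoz{2}{\genl{2}{F}}\to\milk{2}{F}$ factors through the coinvariants $\ho{0}{F^\times}{\hoz{2}{\specl{2}{F}}}$ and, by naturality of $\ee$ under $\genl{2}{F}\hookrightarrow\genl{3}{F}$ together with Theorem \ref{thm:suslin}(d), agrees with the identification coming from $\hoz{2}{\specl{2}{F}}\to\hoz{2}{\specl{3}{F}}\cong\milk{2}{F}$, we obtain $\ee_{2,2}(\eta_2)=\bar\eta$ and hence $\ee(x(a,\eta))=\{a\}\cdot\bar\eta$ in $\milk{3}{F}$. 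Reducing modulo $2$ gives precisely $\mu(a\otimes\bar\eta)$, so the square commutes on every $x(a,\eta)$.

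It remains to see that the $x(a,\eta)$ together with $\image{\filt{1}{\hoz{3}{\genl{2}{F}}}\to\filt{1}{\hoz{3}{\genl{3}{F}}}}$ generate $\filt{1}{\hoz{3}{\genl{3}{F}}}$, and that the square commutes on the latter subgroup. For the generation: $\ee$ maps the subgroup spanned by the $x(a,\eta)$ onto $\milk{1}{F}\cdot\milk{2}{F}=\milk{3}{F}$ --- here one uses that $\hoz{2}{\specl{2}{F}}\to\hoz{2}{\specl{3}{F}}$ is onto and that Milnor $K$-theory is generated in degree $1$ --- while $\ker{\ee\res{\filt{1}{\hoz{3}{\genl{3}{F}}}}}=\image{\filt{1}{\hoz{3}{\genl{2}{F}}}}$, which is exactly the exact column of diagram \eqref{diag:23}; hence $\filt{1}{\hoz{3}{\genl{3}{F}}}$ is the sum of the two subgroups. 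For the second point: $\ee$ vanishes on $\image{\filt{1}{\hoz{3}{\genl{2}{F}}}}$ (it lies in $\ker{\ee}=\image{\hoz{3}{\genl{2}{F}}}$), and by naturality of the edge maps the top arrow carries this subgroup into $\image{\ho{1}{F^\times}{\hoz{2}{\specl{2}{F}}}\to\ho{1}{F^\times}{\hoz{2}{\specl{3}{F}}}}$, which is $\ker{\mu}$ by Theorem \ref{thm:main}; so $\mu$ composed with the top arrow also vanishes there. As both composites are homomorphisms agreeing on a generating set, they coincide.

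The step I expect to take the most care is the identity $\ee(x(a,\eta))=\{a\}\cdot\bar\eta$: one must invoke the multiplicative structure behind Theorem \ref{thm:suslin} and match up $\ee_{1,1}$, $\ee_{2,2}$ and the isomorphism $\hoz{2}{\specl{3}{F}}\cong\milk{2}{F}$ compatibly with the block embeddings and with stabilization. Everything else is formal, given the exact sequences already recorded in diagram \eqref{diag:23} and Theorem \ref{thm:main} and the elementary determinant computation placing $x(a,\eta)$ in $\filt{1}{\hoz{3}{\genl{3}{F}}}$.
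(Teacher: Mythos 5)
Your proof is correct, and it reaches the same endgame as the paper (agreement of the two composites on a family of classes whose $\ee$-images generate $\milk{3}{F}$, then disposal of the rest via the exact column of diagram (\ref{diag:23}) and Theorem \ref{thm:main}), but the family of classes and the computational input are genuinely different. The paper works entirely inside the diagonal tori: it maps the short exact sequence for $T_3=T_1\times ST_3$ into the one for $\genl{3}{F}$ and evaluates both composites on $T_1\otimes\ext{2}{ST_3}$ using the literally stated torus formula of Theorem \ref{thm:suslin}(c) together with formula (\ref{form:sl3}); the two answers there differ by a sign and by $2(\{a,x_2,y_2\}+\{a,x_3,y_3\})$, which makes visible why the square can only commute into $\milkt{3}{F}$ and not $\milk{3}{F}$, and it needs nothing beyond the quoted parts of Theorem \ref{thm:suslin}. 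You instead use cross-product classes $[a]\times\eta$ with $\eta\in\hoz{2}{\specl{2}{F}}$ arbitrary, compute the top arrow by naturality of Hochschild--Serre for the split extension $F^\times\times\specl{2}{F}$, and compute $\ee$ via the multiplicativity of Suslin's isomorphism $H_\bullet(F)\cong\milk{\bullet}{F}$ together with Theorem \ref{thm:suslin}(d) and stabilization-naturality of $\ee$; this gives exact (not merely mod $2$) agreement on your generators and avoids the explicit symbol computation, at the price of invoking the ring-isomorphism property of Suslin's map, which the paper states only in its introduction and not in the formal statement of Theorem \ref{thm:suslin} (it is, however, a standard consequence of Suslin's arguments, so this is a difference of cited input, not a gap). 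A further point in your favour is that you spell out the concluding reduction --- generation of $\filt{1}{\hoz{3}{\genl{3}{F}}}$ by your classes together with the image of $\filt{1}{\hoz{3}{\genl{2}{F}}}$, vanishing of $\ee$ there, and the containment of the image of $\ho{1}{F^\times}{\hoz{2}{\specl{2}{F}}}$ in $\ker{\mu}$ via Theorem \ref{thm:main} --- which the paper compresses into the phrase ``in view of diagram (\ref{diag:23})''.
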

\begin{proof}
We set $ST_n:=\ker{\det: T_n\to F^\times}$ so that $T_n$ decomposes as $T_1\times ST_n$, where $T_1=\set{(a,1,\ldots,1)}{a\in F^\times}
\subset T_n$.

The inclusion $ST_3\to \specl{3}{F}$ induces a map 
\[
\ext{2}{ST_3}=\hoz{2}{ST_3}\to \hoz{2}{\specl{3}{F}}\cong \milk{2}{F}
\]
 which, by 
\ref{thm:suslin} (3), is given by
\begin{eqnarray}\label{form:sl3}
x\wedge y \mapsto
-\{ x_2,y_3\} - \{ x_3,y_2\} -2\cdot\left( \{ x_2,y_2\} + \{ x_3,y_3\} \right)
\end{eqnarray}
where $x=(x_1,x_2,x_3), \ y=(y_1,y_2,y_3)$ with $\prod x_i= \prod y_i=1$.

The Hochschild-Serre spectral sequence of the extension
\[
1\to ST_3\to T_3\to F^\times \to 1
\]
maps to the corresponding spectral sequence for $\genl{3}{F}$. The induced filtration on $\hoz{\bullet}{T_3}$ 
is compatible with the K\"unneth 
decomposition of the integral homology of $T_3=T_1\times ST_2$ 
and the related decomposition of $\ext{\bullet}{T_3}$. In particular we have a map of short exact sequences
\begin{eqnarray*}
\xymatrix{
0\ar[r]&\ext{3}{ST_3}\ar[r]\ar[d]&\filt{1}{\ext{3}{T_3}}\ar[r]\ar[d]&T_1\otimes\ext{2}{ST_3}\ar[r]\ar[d]&0\\
0\ar[r]&\barhoz{3}{\specl{3}{F}}\ar[r]&\filt{1}{\hoz{3}{\genl{3}{F}}}\ar[r]& T_1\otimes\hoz{2}{\specl{3}{F}}\ar[r]&0
}
\end{eqnarray*}

The upper sequence is split by the natural inclusion $T_1\otimes \ext{2}{ST_2} \to \ext{3}{T_3}$. 

Using \ref{thm:suslin} (3)
again, the composite 
\[
T_1\otimes \ext{2}{ST_3}\to \filt{1}{\ext{3}{T_3}}\to\filt{1}{\hoz{3}{\genl{3}{F}}}\to\milk{3}{F}
\]
is given by
\[
(a,1,1)\otimes\left((x_1,x_2,x_3)\wedge (y_1,y_2,y_3)\right)\mapsto \{ a,x_2,y_3\} + \{ a, x_3,y_2\}.
\]
In particular, this composite map is surjective.

On the other hand, formula (\ref{form:sl3}) shows that the composite
\[
T_1\otimes \ext{2}{ST_3}\to T_1\otimes\hoz{2}{\specl{3}{F}}\to \milk{3}{F}
\]
sends $(a,1,1)\otimes\left((x_1,x_2,x_3)\wedge (y_1,y_2,y_3)\right)$ to the element 
\[
-\{ a,x_2,y_3\} - \{ a, x_3,y_2\}-2\cdot\left( \{a, x_2,y_2\}  + \{a, x_3,y_3\}\right).
\]

In view of diagram (\ref{diag:23}), this proves the lemma.
\end{proof}

\begin{cor}\label{cor:main}
For any infinite field $F$, the natural inclusion $\specl{2}{F}\to\specl{3}{F}$ induces an exact sequence 
\begin{eqnarray*}
\xymatrix{
\barhoz{3}{\specl{2}{F}}\ar[r]&\barhoz{3}{\specl{3}{F}}\ar[r]^-{\ee}&2\cdot\milk{3}{F}\ar[r]&0\\
}
\end{eqnarray*}

\end{cor}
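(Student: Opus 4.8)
The plan is to deduce the sequence by applying the snake lemma to the commutative diagram (\ref{diag:23}). Write $f,g,h$ for its three vertical maps, so that $f\colon\barhoz{3}{\specl{2}{F}}\to\barhoz{3}{\specl{3}{F}}$ is induced by $\specl{2}{F}\hookrightarrow\specl{3}{F}$, while $g\colon\filt{1}{\hoz{3}{\genl{2}{F}}}\to\filt{1}{\hoz{3}{\genl{3}{F}}}$ and $h\colon\ho{1}{F^\times}{\hoz{2}{\specl{2}{F}}}\to\ho{1}{F^\times}{\hoz{2}{\specl{3}{F}}}$ are the evident stabilisation maps. The rows of (\ref{diag:23}) are short exact; the exactness of its middle column gives $\coker{g}\cong\milk{3}{F}$, the map $\filt{1}{\hoz{3}{\genl{3}{F}}}\to\milk{3}{F}$ being the restriction of $\ee\colon\hoz{3}{\genl{3}{F}}\to\milk{3}{F}$ from Theorem \ref{thm:suslin}(b); and Theorem \ref{thm:main} gives that $h$ is injective with $\coker{h}\cong\milkt{3}{F}$ via $\mu$.

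Running the snake lemma on the two rows of (\ref{diag:23}), and using $\ker{h}=0$, the six-term sequence collapses to
\[
0\to\ker{f}\to\ker{g}\to 0\to\coker{f}\to\milk{3}{F}\xrightarrow{\partial}\milkt{3}{F}\to 0,
\]
so $\coker{f}$ maps isomorphically onto $\ker{\partial}\subseteq\milk{3}{F}$, the isomorphism being induced by $\barhoz{3}{\specl{3}{F}}\hookrightarrow\filt{1}{\hoz{3}{\genl{3}{F}}}\xrightarrow{\ee}\milk{3}{F}$. It remains to identify $\partial$: in the snake lemma it is induced by the right-hand map $\filt{1}{\hoz{3}{\genl{3}{F}}}\to\ho{1}{F^\times}{\hoz{2}{\specl{3}{F}}}$ of the middle row, and under the identifications $\coker{g}\cong\milk{3}{F}$, $\coker{h}\cong\milkt{3}{F}$ just fixed, Lemma \ref{lem:comm} says precisely that $\partial$ is the canonical reduction $\milk{3}{F}\to\milk{3}{F}/2=\milkt{3}{F}$. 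Hence $\ker{\partial}=2\cdot\milk{3}{F}$, and unwinding the identifications shows that $\ee\colon\barhoz{3}{\specl{3}{F}}\to\milk{3}{F}$ is surjective onto $2\cdot\milk{3}{F}$ with kernel the image of $f$, which is the claimed exact sequence; infiniteness of $F$ enters only through the cited results and (\ref{diag:23}).

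The argument has no genuinely hard step once Theorem \ref{thm:main} and Lemma \ref{lem:comm} are in hand. The only point requiring care is the identification of the snake-lemma connecting map $\partial$ with the reduction $\milk{3}{F}\to\milkt{3}{F}$ — this is exactly what Lemma \ref{lem:comm} supplies — together with checking that the resulting isomorphism $\coker{f}\cong 2\cdot\milk{3}{F}$ is implemented by $\ee$ restricted to $\barhoz{3}{\specl{3}{F}}$, as in the statement.
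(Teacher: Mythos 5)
Your proposal is correct and is essentially the paper's own argument: the paper proves Corollary \ref{cor:main} precisely by applying the snake lemma to Diagram (\ref{diag:23}) together with Theorem \ref{thm:main} and Lemma \ref{lem:comm}, and your write-up just makes the diagram chase explicit (your only slip is calling the induced map $\coker{g}\to\coker{h}$ the ``connecting map,'' which it is not, though the identification you make of it via Lemma \ref{lem:comm} is exactly right).
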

\begin{proof}
This follows from the snake lemma applied to Diagram (\ref{diag:23}), Theorem \ref{thm:main} and Lemma \ref{lem:comm}. 

\end{proof}

\section{Homology Stability for $\hoz{3}{\specl{n}{F}}$}\label{sec:stab}

By Lemma \ref{lem:d2},  $\barhoz{3}{\specl{3}{F}}=\ho{0}{F^\times}{\hoz{3}{\specl{3}{F}}}$. In this 
section we prove that the natural action of $F^\times$ on $\hoz{3}{\specl{3}{F}}$ is trivial 
(and hence that $\barhoz{3}{\specl{3}{F}}=\hoz{3}{\specl{3}{F}}$). 

Let $\Xgen{q} = \xgen{q}{F}$ be the set of all $(q+1)$-tuples $(x_0,\ldots,x_q)$ of points of $\proj{2}{F}$ in general position 
(in the sense that no three of the $x_i$ are collinear in $\proj{2}{F}$). 

Thus $\Xgen{q}$ is naturally a $\genl{3}{F}$-set. We let $\Cgen{q}=\cgen{q}{F}:=\Z[\Xgen{q}]$ be the corresponding permutation module. 
Let $d:\Cgen{q}\to\Cgen{q-1}$ be the standard simplicial boundary map 
\[
(x_0,\ldots,x_q)\mapsto \sum_{i=0}^q(-1)^i(x_0,\ldots,\hat{x_i},\ldots,x_q).
\] 

The natural augmentation $\epsilon: \Cgen{\bullet}\to \Z$ gives a resolution of $\Z$ by $\genl{3}{F}$-modules; i.e., $\epsilon$ is a weak equivalence of complexes of 
$\GR{\Z}{\genl{3}{F}}$-modules. (This is standard: see, for example, section 1 of 
\cite{hutchinson:mat}.)
 
Restricting the action to 
$\specl{3}{F}$, there is a resulting spectral sequence of the form 
\[
E^1_{p,q}=\ho{p}{\specl{3}{F}}{\Cgen{q}}\imp\hoz{p+q}{\specl{3}{F}}
\]
with differentials $d^r:E^r_{p,q}\to E^r_{p+r-1,q-r}$. The differential $d^1:E^1_{p,q}-E^1_{p,q-1}$ 
is the map induced on homology by the boundary map $d$.

Since the $\Cgen{q}$ are permutation modules, the terms $E^1_{p,q}$ and the differentials $d^1$ can be explicitly computed:

We recall the following basic principles (see, for example, \cite{hutchinson:mat}): 
If $G$ is a group and if $X$ is a $G$-set, then Shapiro's Lemma says 
that
\[
\ho{p}{G}{\Z[X]}\cong \bigoplus_{y\in X/G}\ho{p}{G_y}{\Z},
\]
the isomorphism being induced by the maps 
\[
\ho{p}{G_y}{\Z}\to \ho{p}{G}{\Z[X]}
\]
described at the level of chains by
\[
B_p\otimes_{\gr{\Z}{G_y}}\Z\to B_p\otimes_{\gr{\Z}{G}}\Z[X],\quad z\otimes 1\mapsto z\otimes y.
\]

Let $X_i$, $i=1,2$ be transitive $G$-sets. Let $x_i\in X_i$ and let $H_i$ be the stabiliser of $x_i$, $i=1,2$. Let $\phi:\Z[X_1]\to\Z[X_2]$ 
be a map of $\Z[G]$-modules with
\[
\phi(x_1)=\sum_{g\in G/H_2}n_ggx_2,\qquad \mbox{ with } n_g\in\Z.
\] 
Then the induced map $\phi_\bullet:\ho{\bullet}{H_1}{\Z}\to\ho{\bullet}{H_2}{\Z}$ is given by the formula
\begin{eqnarray}\label{formula}
\phi_\bullet(z)
=\sum_{g\in H_1\backslash G/H_2}n_g\mathrm{cor}^{H_2}_{g^{-1}H_1g\cap H_2}
\mathrm{res}^{g^{-1}H_1g}_{g^{-1}H_1g\cap H_2}\left(g^{-1}\cdot z\right)
\end{eqnarray}

There is an obvious extension of this formula to non-transitive $G$-sets.

\subsection{The case $q=0$:}

We have $\Xgen{0}=\proj{2}{F}$. $\specl{3}{F}$ acts transitively on $\Xgen{0}$ and the stabiliser of $\p{e_1}$   is the group
\[
\sgen{0}:
=\left\{
\left(
\begin{array}{ccc}
\det(A)^{-1}&*&*\\
0&a&b\\
0&c&d\\
\end{array}
\right)
\left|
\quad
A= 
\left(
\begin{array}{cc}
a&b\\
c&d\\
\end{array}
\right)
\in \genl{2}{F}
\right.
\right\}.
\]

By Lemma \ref{lem:tb} above, the natural inclusion 
\[
\genl{2}{F}\to \sgen{0}, \quad A\mapsto
\left(
\begin{array}{cc}
\det{A}^{-1}&0\\
0&A\\
\end{array}
\right)
\]
induces a homology isomorphism $\hoz{p}{\genl{2}{F}}\cong\hoz{p}{\sgen{0}}$, and thus
\[
E^1_{p,0}=\ho{p}{\specl{3}{F}}{\Cgen{0}}\cong\hoz{p}{\sgen{0}}\cong\hoz{p}{\genl{2}{F}}.
\]

The edge homomorphisms 
\[
\hoz{p}{\genl{2}{F}}=E^1_{p,0}\to E^\infty_{p,0}\to \hoz{p}{\specl{3}{F}}
\]
are the maps on integral homology induced by the natural inclusion
\[
\genl{2}{F}\to \specl{3}{F}, \quad A\mapsto
\left(
\begin{array}{cc}
\det{A}^{-1}&0\\
0&A\\
\end{array}
\right)
\]
and so $E^\infty_{p,0}$ is isomorphic to the image of $\hoz{p}{\genl{2}{F}}$ in $\hoz{p}{\specl{3}{F}}$.

\subsection{The case $q=1$:}

$\Xgen{1}$ consists of pairs $(x_0,x_1)$ of distinct points of $\proj{2}{F}$. The action of $\specl{3}{F}$ is again transitive and the 
stabiliser of $(\p{e_1},\p{e_2})$ is the group

\[
\sgen{1}:
=\left\{
\left(
\begin{array}{ccc}
a&0&*\\
0&b&*\\
0&0&c\\
\end{array}
\right)
\left|
\quad
abc=1\mbox{ in } F^\times
\right.
\right\}.
\]

By Lemma \ref{lem:tb} again, the inclusion $ST_3\to \sgen{1}$ induces an isomorphism on integral homology and so
\[
E^1_{p,1}=\ho{p}{\specl{3}{F}}{\Cgen{1}}\cong\hoz{p}{\sgen{1}}\cong\hoz{p}{ST_3}\cong \hoz{p}{F^\times\times F^\times}.
\]

\subsection{The case $q=2$:}

$\Xgen{2}$ consists of `triangles' -- triples of non-collinear points -- in $\proj{2}{F}$. Again, the action of $\specl{3}{F}$ is 
transitive, and the stabiliser of $(\p{e_1},\p{e_2},\p{e_3})$ is the group of diagonal matrices, $ST_3$.  Thus

\[
E^1_{p,2}=\ho{p}{\specl{3}{F}}{\Cgen{2}}\cong \hoz{p}{ST_3}\cong \hoz{p}{F^\times\times F^\times}.
\]

\subsection{The case $q=3$:}

$\Xgen{3}$ consists of $4$-tuples, $(x_0,x_1,x_2,x_3)$ of points in $\proj{2}{F}$ in general position. 

\begin{lem}\label{lem:pi}
Let $\tilde{T_3}$ be the multiplicative group $\{ (a,b,c)\in F^3\ |\ abc\not=0 \}$.
Let $D_3\subset \tilde{T_3}$ be 
the subgroup generated by $\{ (a,b,c)\ |\ abc=1 \}$ and $\Delta(F^\times):=\{ (a,a,a)\ |\ a\in F^\times\}$.

Then the `determinant map'
 $\ddet:\tilde{T_3}\to F^\times/(F^\times)^3, (a,b,c)\mapsto abc\pmod{(F^\times)^3}$ induces an isomorphism
\(
\tilde{T_3}/D_3\cong F^\times/(F^\times)^3.
\)
\end{lem}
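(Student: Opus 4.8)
The plan is to verify directly that the determinant map $\ddet$ is well-defined on the quotient $\tilde{T_3}/D_3$ and then exhibit an explicit inverse. First I would note that $\ddet:\tilde{T_3}\to F^\times/(F^\times)^3$ is a surjective group homomorphism: it is the composite of the obvious homomorphism $(a,b,c)\mapsto abc$ with the projection $F^\times\to F^\times/(F^\times)^3$, and surjectivity is clear since $(a,1,1)\mapsto a\pmod{(F^\times)^3}$. The real content is the identification of its kernel with $D_3$. One inclusion is immediate: if $abc=1$ then $abc\in(F^\times)^3$ trivially, and $(a,a,a)\mapsto a^3\equiv 1\pmod{(F^\times)^3}$, so both sets of generators of $D_3$ lie in $\ker{\ddet}$, hence $D_3\subseteq\ker{\ddet}$.

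For the reverse inclusion, suppose $(a,b,c)\in\tilde{T_3}$ with $abc=t^3$ for some $t\in F^\times$. Then $(a,b,c) = (a/t,\, b/t,\, c/t)\cdot(t,t,t)$, where $(t,t,t)\in\Delta(F^\times)\subseteq D_3$ and $(a/t,b/t,c/t)$ has product $abc/t^3 = 1$, so $(a/t,b/t,c/t)$ is one of the generators of $D_3$. Hence $(a,b,c)\in D_3$, giving $\ker{\ddet}\subseteq D_3$. Combining the two inclusions, $\ker{\ddet}=D_3$, and the first isomorphism theorem yields the induced isomorphism $\tilde{T_3}/D_3\xrightarrow{\ \cong\ }F^\times/(F^\times)^3$.

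This argument is entirely elementary, so I do not anticipate a genuine obstacle; the only point requiring a moment's care is checking that $D_3$ is actually a subgroup (so that the quotient makes sense) — but this is automatic since it is defined as the subgroup \emph{generated} by the two displayed subsets. If one prefers a more structural proof, an alternative is to observe that the subgroup $\{(a,b,c)\mid abc=1\}$ is the kernel of the plain product map $\tilde{T_3}\to F^\times$, so $\tilde{T_3}/\{abc=1\}\cong F^\times$ via $(a,b,c)\mapsto abc$, and under this isomorphism the image of $\Delta(F^\times)$ is exactly $(F^\times)^3$; taking a further quotient then gives $\tilde{T_3}/D_3\cong F^\times/(F^\times)^3$ with the induced map being precisely $\ddet$. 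Either way the proof is short.
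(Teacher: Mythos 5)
Your proof is correct: the paper states this lemma without any proof, treating it as routine, and your argument supplies exactly the verification the authors leave to the reader. In particular the key step --- writing $(a,b,c)=(a/t,\,b/t,\,c/t)\cdot(t,t,t)$ when $abc=t^{3}$ to show $\ker{\ddet}\subseteq D_3$ --- together with the trivial inclusion $D_3\subseteq\ker{\ddet}$ and the first isomorphism theorem is the natural (and surely intended) argument, so there is nothing to compare or correct.
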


\begin{lem}\label{lem:x3}
Let $x=(x_0,x_1,x_2,x_3)\in \Xgen{3}$, and choose $v_i\in F^3\setminus \{ 0\}$ with  $x_i=\p{v_i}$. Let
\[
V:=(v_0|v_1|v_2)\in\genl{3}{F}.
\]

Then $V^{-1}v_3\in \tilde{T_3}$.  Let 
\[
\lambda(x):=\ddet(V^{-1}v_3)\cdot\det{V}\in F^\times/(F^\times)^3.
\]

Then $\lambda(x)$ is well-defined (i.e. independent of the choice of the $v_i$) and $x,y\in \Xgen{3}$ are in the same 
$\specl{3}{F}$-orbit if and only if $\lambda(x)=\lambda(y)$. 
\end{lem}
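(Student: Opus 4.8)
I want to prove Lemma~\ref{lem:x3}, which has three assertions: that $V^{-1}v_3\in\tilde{T_3}$, that $\lambda(x)$ is well-defined, and that $\lambda$ is a complete invariant for the $\specl{3}{F}$-action on $\Xgen{3}$. The first assertion is immediate from general position: since $(x_0,x_1,x_2,x_3)$ are in general position, no three are collinear, so in particular $v_0,v_1,v_2$ are linearly independent (hence $V\in\genl{3}{F}$) and, writing $V^{-1}v_3=(a,b,c)$, each coordinate is nonzero because $a=0$ would force $v_3\in\langle v_1,v_2\rangle$, i.e. $x_1,x_2,x_3$ collinear, contradiction. So $V^{-1}v_3\in\tilde{T_3}$.

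\emph{Well-definedness.} The only choices are the scalars used to represent the $x_i$: replacing $v_i$ by $t_iv_i$ ($t_i\in F^\times$) changes $V$ to $V\cdot\mathrm{diag}(t_0,t_1,t_2)$ and $v_3$ to $t_3v_3$. Then $\det V$ is multiplied by $t_0t_1t_2$, while $V^{-1}v_3$ is replaced by $\mathrm{diag}(t_0,t_1,t_2)^{-1}V^{-1}(t_3v_3)=(t_3/t_0,\,t_3/t_1,\,t_3/t_2)\cdot$(old entries, coordinatewise), so $\ddet(V^{-1}v_3)$ is multiplied by $t_3^3/(t_0t_1t_2)$. The product $\ddet(V^{-1}v_3)\cdot\det V$ is therefore multiplied by $t_3^3\in(F^\times)^3$, hence unchanged in $F^\times/(F^\times)^3$. (One should also check independence of the ordering convention implicit in forming $V=(v_0|v_1|v_2)$, but that ordering is fixed by the tuple, so nothing is needed here.)

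\emph{$\lambda$ is a complete orbit invariant.} For $g\in\specl{3}{F}$, applying $g$ replaces each $v_i$ by $gv_i$, so $V$ becomes $gV$, $\det(gV)=\det V$ since $\det g=1$, and $(gV)^{-1}(gv_3)=V^{-1}v_3$; thus $\lambda(gx)=\lambda(x)$, so $\lambda$ is constant on orbits. Conversely, suppose $\lambda(x)=\lambda(y)$. Using $\genl{3}{F}$ one can move any general-position $4$-tuple to one of the form $(\p{e_1},\p{e_2},\p{e_3},\p{w})$ with $w\in\tilde{T_3}$: indeed, choosing representatives and letting $V=(v_0|v_1|v_2)$, the element $V^{-1}\in\genl{3}{F}$ sends $x$ to $(\p{e_1},\p{e_2},\p{e_3},\p{V^{-1}v_3})$. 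So it suffices to analyze, within a single $\genl{3}{F}$-orbit, which such normal forms lie in a common $\specl{3}{F}$-orbit. The stabilizer in $\genl{3}{F}$ of $(\p{e_1},\p{e_2},\p{e_3})$ is the diagonal torus $\tilde{T_3}$ (acting by scaling on the $e_i$), and an element $\mathrm{diag}(t_1,t_2,t_3)$ sends $\p{w}=\p{(a,b,c)}$ to $\p{(t_1a,t_2b,t_3c)}$, i.e. acts on the last point through the quotient $\tilde{T_3}\to\tilde{T_3}$ by the scaling subgroup $\Delta(F^\times)$. Restricting to $\specl{3}{F}$ means $t_1t_2t_3=1$. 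Hence the set of $\specl{3}{F}$-orbits of normal forms $(\p{e_1},\p{e_2},\p{e_3},\p{w})$, $w\in\tilde{T_3}$, is the double quotient $\{(a,b,c)\}\big/\bigl(\{t_1t_2t_3=1\}\cdot\Delta(F^\times)\bigr)=\tilde{T_3}/D_3$, which by Lemma~\ref{lem:pi} is identified with $F^\times/(F^\times)^3$ via $\ddet$. One checks directly that under the normalization $V^{-1}$, with $\det(V^{-1})=\det V^{-1}=\det V$ in $F^\times/(F^\times)^3$ (squares are cubes-times-stuff — rather: $\det(V^{-1})=(\det V)^{-1}$ and $(\det V)^{-1}\equiv(\det V)^2\pmod{(F^\times)^3}$, so one must be a little careful), the invariant $\lambda(x)$ is exactly the class of $\ddet(V^{-1}v_3)\det V$ recorded in this double-quotient description; thus equal $\lambda$-values force the normal forms into the same $\specl{3}{F}$-orbit, and since $x,y$ are $\genl{3}{F}$-equivalent to their normal forms and $\lambda$ is $\genl{3}{F}$\nobreakdash-equivariant in the appropriate sense, $x$ and $y$ lie in the same $\specl{3}{F}$-orbit.

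\emph{The main obstacle} I expect is bookkeeping the cube-class arithmetic in the last step: one must reconcile the factor $\det V$ appearing in the definition of $\lambda$ (needed precisely so that $\lambda$ is $\genl{3}{F}$-invariant up to the right twist and not merely $\specl{3}{F}$-invariant) with the double-coset computation, where the natural invariant that falls out is $\ddet(w)=\ddet(V^{-1}v_3)$ twisted by how $V^{-1}\notin\specl{3}{F}$ in general. The clean way is: two tuples $x,y$ are $\specl{3}{F}$-equivalent iff their $\genl{3}{F}$-normalizations differ by an element of $\tilde T_3\cap\specl{3}{F}$ modulo $\Delta(F^\times)$; track the value of $\ddet(V^{-1}v_3)\cdot\det V$ through an arbitrary $\genl{3}{F}$-element $g$ to see it is unchanged modulo cubes precisely because $\det(gV)=\det g\cdot\det V$ absorbs the $\det g$ ambiguity — making the definition's $\det V$ factor exactly the correction term that upgrades $\genl{3}{F}$-invariance-up-to-twist to a genuine $\specl{3}{F}$-invariant. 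Once that identity is pinned down, Lemma~\ref{lem:pi} finishes the classification.
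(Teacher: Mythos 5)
Your treatment of the first two assertions and of $\specl{3}{F}$-invariance is fine (the direct rescaling computation for well-definedness is correct, and is a reasonable alternative to the paper, which gets well-definedness as a byproduct of its orbit computation). The genuine gap is in the completeness direction, $\lambda(x)=\lambda(y)\Rightarrow x\sim y$. You normalize $x$ by $V_x^{-1}\in\genl{3}{F}$, which in general has determinant $\neq 1$, and then assert that equal $\lambda$-values ``force the normal forms into the same $\specl{3}{F}$-orbit'' and that this transfers back to $x$ and $y$. Both steps fail. Writing $w_x:=V_x^{-1}v_3$, the normal form $(\p{e_1},\p{e_2},\p{e_3},\p{w_x})$ has invariant $\ddet(w_x)$, which differs from $\lambda(x)$ by the factor $\det V_x$ modulo cubes; so $\lambda(x)=\lambda(y)$ only gives $\ddet(w_x)\equiv\ddet(w_y)\cdot\det V_y/\det V_x$, not $\ddet(w_x)\equiv\ddet(w_y)$. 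Indeed the normal form is not even independent of the chosen representatives: rescaling $v_0,v_1,v_2$ multiplies $\ddet(V^{-1}v_3)$ by $(t_0t_1t_2)^{-1}$ mod cubes, so one can have $x=y$ whose two ``normalizations'' lie in different $\specl{3}{F}$-orbits. Moreover, even if the normal forms were $\specl{3}{F}$-equivalent, $x$ and $y$ are related to them only by elements of $\genl{3}{F}$, so no conclusion about $\specl{3}{F}$-equivalence of $x,y$ would follow. Your ``clean way'' has the same defect: characterizing equivalence by the normalizations differing by $\bigl(\tilde{T_3}\cap\specl{3}{F}\bigr)\cdot\Delta(F^\times)=D_3$ amounts to requiring $\ddet(w_x)\equiv\ddet(w_y)$, which omits exactly the $\det V_x/\det V_y$ coset; and $\lambda$ is not unchanged under arbitrary $g\in\genl{3}{F}$ — one has $\lambda(gx)=\det(g)\cdot\lambda(x)$ in $F^\times/(F^\times)^3$.

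The missing bridge can be supplied in two ways. The paper's route avoids $V^{-1}$ altogether: since $\specl{3}{F}$ is transitive on $\Xgen{2}$, there is $A\in\specl{3}{F}$ and $w\in\tilde{T_3}$ with $A\cdot(\p{e_1},\p{e_2},\p{e_3},\p{w})=x$; writing $Ae_i=c_iv_{i-1}$ gives $A=V\cdot\mathrm{diag}(c_1,c_2,c_3)$ with $c_1c_2c_3=1/\det V$, and $Aw=cv_3$ then yields $\ddet(w)=\ddet(V^{-1}v_3)\det V=\lambda(x)$ in $F^\times/(F^\times)^3$; combined with Lemma \ref{lem:pi} (which classifies the standard forms up to $\specl{3}{F}$ by $\ddet$), this gives well-definedness and both directions at once. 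Alternatively, you can complete your double-coset computation by carrying the determinant along: $x\sim y$ if and only if there is a diagonal $h$ with $\det h=\det V_x/\det V_y$ and $h\p{w_x}=\p{w_y}$, and solving for the projective scalar $s$ reduces this to $s^3=(\det V_x/\det V_y)\cdot\ddet(w_x)/\ddet(w_y)$, which has a solution precisely when $\lambda(x)=\lambda(y)$. Either way, the $\det V$ coset must be tracked explicitly; as written, your argument drops it at the decisive step.
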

\begin{proof}
Observe first that $(\p{e_1},\p{e_2},\p{e_3},\p{v})\in \Xgen{3}$ if and only if $v\in \tilde{T_3}$.

Since the stabiliser of $(\p{e_1},\p{e_2},\p{e_3})$ is $ST_3\subset \specl{3}{F}$, it follows from Lemma \ref{lem:pi} that 
$(\p{e_1},\p{e_2},\p{e_3},\p{v})$ and $(\p{e_1},\p{e_2},\p{e_3},\p{w})$ are in the same $\specl{3}{F}$-orbit if and only if 
$\ddet(v)=\ddet(w)$.

Since $\specl{3}{F}$ acts transitively on $\Xgen{2}$, it follows that there exists $A\in\specl{3}{F}$ with 
\[
A\cdot (\p{e_1},\p{e_2},\p{e_3},\p{w})=x= (\p{v_0},\p{v_1},\p{v_2},\p{v_3}).
\]

Thus there exists $c_1,c_2,c_3\in F^\times$ with $Ae_i=c_iv_{i-1}$ for $i=1,2,3$

It follows that 
\[
A=[c_1v_0|c_2v_1|c_3v_2]=
V\cdot
\begin{pmatrix}
c_1&0&0\\
0&c_2&0\\
0&0&c_3\\
\end{pmatrix}
\]. 

Furthermore, $c_1c_2c_3=1/\det(V)$ since $\det(A)=1$. 

We have $Aw=cv_3$ for some $c\in F^\times$. Thus
\[
V\cdot 
\begin{pmatrix}
c_1&0&0\\
0&c_2&0\\
0&0&c_3\\
\end{pmatrix}
w = cv_3\qquad \imp \begin{pmatrix}
c_1&0&0\\
0&c_2&0\\
0&0&c_3\\
\end{pmatrix}w=c(V^{-1}v_3)\in \tilde{T_3}.
\]

Applying $\ddet$ to this now gives 
\[
(c_1c_2c_3)\ddet(w)=c^3\ddet(V^{-1}v_3)\qquad \imp \ddet(w)=\ddet(v^{-1}v_3)\det(V)\in F^\times/(F^\times)^3.
\]
\end{proof}

It follows that the $\specl{3}{F}$-orbits of $\Xgen{3}$ are indexed by $F^\times/(F^\times)^3$, where $\bar{a}\in F^\times/(F^\times)^3$
corresponds to the orbit of the element
\[
[\bar{a}]:=(\p{e_1},\p{e_2},\p{e_3},[1,1,a]).
\]

The stabiliser of $[\bar{a}]$ is $\Delta(F^\times)\cap \specl{3}{F}=\mu_3(F)$.

Thus 
\[
E^1_{p,3}=\ho{p}{\specl{3}{F}}{\Xgen{3}}\cong \bigoplus_{\bar{a}\in F^\times/(F^\times)^3}\hoz{p}{\mu_3(F)}\cdot [\bar{a}].
\]

In particular, if $\zeta_3\not\in F$ then $E^1_{p,3}=0$ for $p>0$ and, in any case,
\[
E^1_{0,3}\cong \bigoplus_{\bar{a}\in F^\times/(F^\times)^3}\Z\cdot [\bar{a}] = \Z[F^\times/(F^\times)^3].
\]

\subsection{The case $q=4$:}

Given $a,b,c,d\in F^\times$ satisfying $(b-a)(d-c)(ad-bc)\not=0$, then
\[
\begin{Vmatrix}
a&b\\
c&d\\
\end{Vmatrix}
:=
(\p{e_1},\p{e_2},\p{e_3},[1,a,c],[1,b,d])\in \Xgen{4}.
\]

Let $Y_4$ be the set of all such $5$-tuples. Every element of $\Xgen{4}$ belongs to the $\specl{3}{F}$-orbit of an element of this type. 
Furthermore, two elements
\[
\begin{Vmatrix}
a&b\\
c&d\\
\end{Vmatrix},\ 
\begin{Vmatrix}
a'&b'\\
c'&d'\\
\end{Vmatrix} \in Y_4
\] 
belong to the same $\specl{3}{F}$-orbit if and only if there exist $r,s\in F^\times$ satisfying
\begin{eqnarray*}
rs\in (F^\times)^3,\qquad \frac{a'}{a}=\frac{b'}{b}=r,\qquad \frac{c'}{c}=\frac{d'}{d}=s.
\end{eqnarray*}
Thus, letting $Z_4$ equal $Y_4$ modulo this equivalence relation we have 
\[
E^1_{p,4}=\bigoplus_{{\tiny \begin{Vmatrix}
a&b\\
c&d\\
\end{Vmatrix}}\in Z_4}\hoz{p}{\mu_3(F)}\cdot {\scriptsize \begin{Vmatrix}
a&b\\
c&d\\
\end{Vmatrix}}.
\]

Thus the $E^1$-term of the spectral sequence has the form

 \begin{eqnarray*}
 \xymatrix{
 \bigoplus_{Z_4}\Z\cdot {\scriptsize \begin{Vmatrix}
a&b\\
c&d\\
\end{Vmatrix}}
 &\cdots &\cdots &\cdots \\
 \Z[F^\times/(F^\times)^3]
 &
 \cdots
 &\cdots &\cdots \\
 \Z&ST_3&\hoz{2}{ST_3}&\cdots\\
 \Z
 &ST_3
 &\hoz{2}{ST_3}
 &\hoz{3}{ST_3}\\
\Z&
F^\times
&
\hoz{2}{\genl{2}{F}}
&\hoz{3}{\genl{2}{F}}
 }
 \end{eqnarray*}

\subsection{Some $E^\infty$-terms}
To compute $\hoz{3}{\specl{3}{F}}$, we need to calculate the terms $E^\infty_{p,q}$ with $p+q=3$. 

As already observed $E^\infty_{3,0}$ is the image of $\hoz{3}{\genl{2}{F}}$. We will show that the other $E^\infty$-terms on the line 
$p+q=3$ are $0$.

We have $E^\infty_{0,3}=E^2_{0,3}= \coker{d^1}:E^1_{0,4}\to E^1_{0,3}$.

\begin{lem}
The map 
\[
d^1:\bigoplus \Z\cdot {\scriptsize \begin{Vmatrix}
a&b\\
c&d\\
\end{Vmatrix}} = E^1_{0,4}\to E^1_{0,3}=\bigoplus_{a\in F^\times/(F^\times)^3}\Z\cdot [a]
\]
is surjective.
\end{lem}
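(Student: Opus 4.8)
The plan is to show surjectivity of $d^1: E^1_{0,4}\to E^1_{0,3}$ by exhibiting, for each generator $[\bar a]$ with $\bar a\in F^\times/(F^\times)^3$, an explicit element of $E^1_{0,4}$ whose boundary hits $[\bar a]$ modulo the images of the other generators. Since $E^1_{0,q}=C_q(F)_{\specl{3}{F}}$ is the free abelian group on the $\specl{3}{F}$-orbits of tuples in general position, the differential $d^1$ is induced by the simplicial boundary $d(x_0,\ldots,x_4)=\sum_{i=0}^4(-1)^i(x_0,\ldots,\hat{x_i},\ldots,x_4)$, followed by the identification of each face with its orbit class $\lambda(\cdot)\in F^\times/(F^\times)^3$ as in Lemma \ref{lem:x3}.

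The key computation is to pick a convenient representative $5$-tuple in $Y_4$ and evaluate $\lambda$ on each of its five $4$-element faces. The natural choice is $\begin{Vmatrix} a&b\\ c&d\end{Vmatrix}=(\p{e_1},\p{e_2},\p{e_3},[1,a,c],[1,b,d])$ with the parameters chosen (using that $F$ is infinite, so that the genericity conditions $(b-a)(d-c)(ad-bc)\neq 0$ together with all the non-collinearity requirements on the faces can be met) so that four of the five faces land in orbits we already control — ideally in the orbit of $[\bar 1]$, i.e. $\lambda=1$ in $F^\times/(F^\times)^3$ — while the fifth face, namely $(\p{e_1},\p{e_2},\p{e_3},[1,a,c])$ or a suitable relabelling, has $\lambda$-invariant equal to a prescribed class. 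Concretely, by Lemma \ref{lem:x3}, $\lambda(\p{e_1},\p{e_2},\p{e_3},[1,a,c])=\ddet(1,a,c)=\overline{ac}$, so choosing $c=1$ and letting $a$ range over representatives of $F^\times/(F^\times)^3$ gives every target class from this one face; one then chooses $b,d$ (again using infiniteness of $F$) so that the remaining four faces all lie in the trivial orbit. Then $d^1\big(\begin{Vmatrix} a&b\\ c&d\end{Vmatrix}\big) = \pm[\bar a] + (\text{integer})\cdot[\bar 1]$, and since $[\bar 1]$ is itself in the image of some such class we conclude by induction / linear algebra over the generating set that every $[\bar a]$ is in the image.

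The main obstacle I expect is the bookkeeping of genericity: one must verify that the chosen representative $\begin{Vmatrix} a&b\\ c&d\end{Vmatrix}$ actually lies in $\Xgen{4}$ (no three of its five points collinear) \emph{and} that each of its $4$-element faces lies in $\Xgen{3}$ (general position of the $4$-tuple), all simultaneously; each condition is the non-vanishing of a determinant polynomial in $a,b,c,d$, and since $F$ is infinite a common non-vanishing choice exists, but one should exhibit it (or at least the polynomial conditions) cleanly rather than just invoke infiniteness vaguely. A secondary point is to track the $\specl{3}{F}$-orbit of each face correctly via $\lambda$ — in particular to handle faces that are not already in the "standard" form $(\p{e_1},\p{e_2},\p{e_3},\p{v})$ by first applying an element of $\specl{3}{F}$ (or an explicit change of basis together with its determinant correction, exactly as in the proof of Lemma \ref{lem:x3}) to bring them into standard form before reading off $\lambda$. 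Once these computations are in place the surjectivity is immediate.
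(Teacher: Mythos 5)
Your overall setup (identify $E^1_{0,q}$ with the free abelian group on the $\specl{3}{F}$-orbits and compute $d^1$ face by face via the invariant $\lambda$ of Lemma \ref{lem:x3}) matches the paper, but the step that is supposed to deliver surjectivity contains a genuine gap. You propose to choose $b,d$ ``using infiniteness of $F$'' so that the four faces other than $(\p{e_1},\p{e_2},\p{e_3},[1,a,c])$ all lie in the trivial orbit. Lying in the trivial orbit is not a genericity (Zariski-open) condition: by the face computation it means that the explicit quantities $(b-a)(d-c)$, $ab(a-b)(ad-bc)$, $(c-d)(ad-bc)cd$ and $bd$ are all cubes in $F^\times$, which is an arithmetic condition on $F$ (think of $F=\Q$, where cubes are sparse). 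Infiniteness of $F$ only guarantees the simultaneous non-vanishing of finitely many polynomials --- exactly the part you flag as ``bookkeeping'' --- and gives no control over which cube classes the faces land in. Without that control your identity $d^1(\cdot)=\pm[\bar a]+n\cdot[\bar 1]$ need not hold, and the concluding ``induction/linear algebra over the generating set'' has nothing to run on.

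The paper resolves this not by an existence claim but by a specific specialization that forces cancellation. It first records, for general parameters, $d^1\begin{Vmatrix} a&b\\ c&d\end{Vmatrix}=[(b-a)(d-c)]-[ab(a-b)(ad-bc)]+[(c-d)(ad-bc)cd]-[bd]+[ac]$, and then sets $c=b$, $d=a$ with $a^2\neq b^2$. The last two terms cancel outright, and the second and third agree modulo cubes because $-1=(-1)^3$ is always a cube, so that $d^1\begin{Vmatrix} a&b\\ b&a\end{Vmatrix}=[(b-a)^2]=[(b-a)^{-1}]$. Since $b-a$ can be made equal to any prescribed element of $F^\times$ while keeping $a^2\neq b^2$, every class in $F^\times/(F^\times)^3$ is hit by a single generator, with no need to normalize the other faces at all. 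To repair your argument you would need either such an exact cancellation or an explicit choice of $b,d$ making the listed quantities cubes; as written, that is the missing idea.
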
 
\begin{proof}
\begin{eqnarray*}
d^1\left({\scriptsize \begin{Vmatrix}
a&b\\
c&d\\
\end{Vmatrix}}\right)&=&d^1(\p{e_1},\p{e_2},\p{e_3},[1,a,c],[1,b,d])\\
&=& \lambda(\p{e_2},\p{e_3},[1,a,c],[1,b,d])-\lambda(\p{e_1},\p{e_3},[1,a,c],[1,b,d])\\
&&+\lambda(\p{e_1},\p{e_2},[1,a,c],[1,b,d])-\lambda(\p{e_1},\p{e_2},\p{e_3},[1,b,d])+\lambda(\p{e_1},\p{e_2},\p{e_3},[1,a,c])\\
&=& [(b-a)(d-c)]-[ab(a-b)(ad-bc)]+[(c-d)(ad-bc)cd]-[bd]+[ac].
\end{eqnarray*}
It follows, in particular, that if $b^2\not= a^2$, then
\[
d^1\left({\scriptsize \begin{Vmatrix}
a&b\\
b&a\\
\end{Vmatrix}}\right)
=
[(b-a)^2]=[(b-a)^{-1}]
\]
so that $d^1$ is surjective as claimed (and $E^2_{0,3}=0$).
\end{proof}

\begin{lem}\label{lem:triv}
The spectral sequence admits an $F^\times$-action compatible with the action of $F^\times$ on $\hoz{n}{\specl{3}{F}}$. The induced 
$F^\times$-action on the terms $E^r_{p,1}$ and $E^r_{p,2}$ is trivial for all $p$ and all $r\geq 1$. 
\end{lem}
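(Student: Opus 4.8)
The plan is to exhibit the $F^\times$-action at the chain level and then check triviality on the relevant rows by using the explicit orbit/stabiliser descriptions computed above. First I would recall the source of the action: $F^\times$ acts on $\specl{3}{F}$ via conjugation by the diagonal matrix $\mathrm{diag}(a,1,1)$ (or, equivalently, by any lift of $a\in F^\times$ to $\genl{3}{F}$), and this conjugation action is trivial on $\hoz{n}{\specl{3}{F}}$ only in the limit; on $\hoz{3}{\specl{3}{F}}$ itself it is precisely the action we are trying to understand. Since $\genl{3}{F}$ also permutes the set $\Xgen{q}$ and the permutation modules $\Cgen{q}$, and the resolution $\epsilon:\Cgen{\bullet}\to\Z$ consists of $\genl{3}{F}$-modules, the whole construction $E^1_{p,q}=\ho{p}{\specl{3}{F}}{\Cgen{q}}$ is $\genl{3}{F}$-equivariant, hence carries an action of $F^\times=\genl{3}{F}/\specl{3}{F}$ compatible with the abutment. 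This gives the first sentence of the lemma essentially for free; the content is the triviality statement.

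Next I would treat the rows $q=1$ and $q=2$. In both cases $\specl{3}{F}$ acts transitively on $\Xgen{q}$ with stabiliser conjugate to $ST_3$ (for $q=2$ it is exactly $ST_3$, for $q=1$ it is $\sgen{1}$, which has the same integral homology as $ST_3$ by Lemma \ref{lem:tb}), so by Shapiro's Lemma $E^1_{p,q}\cong\hoz{p}{ST_3}$. The key point is that the $F^\times$-action, transported through this isomorphism, becomes conjugation on $ST_3$ by a diagonal matrix $t=\mathrm{diag}(a,1,1)\in\genl{3}{F}$ — possibly composed with the coset-representative bookkeeping of formula (\ref{formula}), but since the action is transitive and the relevant double cosets are trivial, it reduces to plain conjugation. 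Conjugation of $ST_3$ by a diagonal matrix is the identity automorphism of $ST_3$ (diagonal matrices commute), so the induced map on $\hoz{p}{ST_3}$ is the identity for every $p$. Hence the action on $E^1_{p,q}$ is trivial for $q\in\{1,2\}$, and since the spectral sequence differentials are $F^\times$-equivariant, triviality propagates to every page: the action on $E^r_{p,q}$ is trivial for all $r\geq 1$ and all $p$ when $q\in\{1,2\}$.

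The main obstacle — and the one point requiring genuine care rather than a one-line observation — is verifying that the $F^\times$-action really does transport to honest conjugation by a diagonal matrix under the chosen Shapiro isomorphisms, i.e. that the choice of orbit representatives $(\p{e_1},\p{e_2})$ and $(\p{e_1},\p{e_2},\p{e_3})$ can be made compatible with the $\genl{3}{F}$-action so that no extra permutation of summands or twisting by an outer automorphism creeps in. Concretely one checks that a diagonal matrix $t$ stabilises these base-point tuples, so $t\cdot y=y$ for the chosen $y$, and then formula (\ref{formula}) collapses to $z\mapsto \mathrm{res}\,\mathrm{cor}$ over the single trivial double coset $ST_3\backslash\specl{3}{F}/ST_3$, leaving exactly $\mathrm{conj}_t$. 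For $q=1$ the base point $(\p{e_1},\p{e_2})$ is likewise fixed by every diagonal matrix, and $\sgen{1}$ is normalised by diagonal matrices with the induced automorphism again inner-by-diagonal, hence trivial on homology by the same commutation argument (or by the naturality of the isomorphism $\hoz{p}{ST_3}\cong\hoz{p}{\sgen{1}}$ of Lemma \ref{lem:tb}). Once this compatibility is pinned down the rest is immediate, and I would keep the write-up at the level of ``conjugation by a diagonal matrix acts as the identity,'' since that is the whole mathematical content.
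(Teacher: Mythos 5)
Your proposal is correct and follows essentially the same route as the paper: the action is realised via conjugation by a diagonal splitting of the determinant, the chosen orbit representatives for $q=1,2$ are fixed by diagonal matrices, so the induced action on $E^1_{p,1}$ and $E^1_{p,2}$ is conjugation on the stabiliser, which centralises $ST_3$ (and is trivial on $\hoz{p}{\sgen{1}}$ by naturality of Lemma \ref{lem:tb}), and triviality then persists to all pages. Your attention to the Shapiro/double-coset bookkeeping is exactly the point the paper settles by writing the chain-level map $[g_1|\cdots|g_p]\otimes 1\mapsto[g_1|\cdots|g_p]\otimes e(q)$ explicitly.
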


\begin{proof} 
Let $\delta:F^\times \to \genl{3}{F}$ be the map $a\mapsto \mathrm{diag}(1,1,a)$ (a splitting of the determinant map). 

Recall that the spectral sequence $E^r_{\bullet,\bullet}(3)$ is the second spectral sequence associated to the double complex
\[
E^0_{p,q}(3):= B_p(\specl{3}{F})\otimes_{\GR{\Z}{\specl{3}{F}}}C_q.
\]   

Let $F^\times$ act on $E^0_{p,q}$ by
\[
a\cdot\left([g_1|\cdots|g_p]\otimes (x_0,\ldots,x_q)\right)=[\delta(a)g_1\delta(a)^{-1}|\cdots |\delta(a)g_p\delta(a)^{-1}]
\otimes \delta(a)\cdot (x_0,\ldots,x_q).
\]

This action makes $E^0_{\bullet,\bullet}$ into a double complex of $\gr{\Z}{F^\times}$-modules and the induced actions on 
$E^1_{p,q}=\ho{p}{\specl{3}{F}}{C_q}$ are the natural actions derived from the $\genl{3}{F}$-action on $C_q$ and the short exact sequence
$1\to \specl{3}{F}\to\genl{3}{F}\to F^\times\to 1$.

At the level of chains, the isomorphisms  $\hoz{p}{ST_3}\cong\hoz{p}{S_q}\cong \ho{p}{\specl{3}{F}}{C_q}$ for $q=1,2$ are given by
\begin{eqnarray*}
B_p(ST_3)\otimes_{\GR{\Z}{ST_3}}\Z
&\to
B_p(S_q)\otimes_{\GR{\Z}{S_q}}\Z
\to
&B_p(\specl{3}{F})\otimes_{\GR{\Z}{\specl{3}{F}}}C_q\\
\ [g_1|\cdots |g_p]\otimes 1 
&
\mapsto
&
[g_1|\cdots |g_p]\otimes e(q)\\
\end{eqnarray*}
where $ e(1):= \p{e_1}$ and $e(2):=(\p{e_1},\p{e_2})$.

If the domain here is endowed with the trivial $\gr{\Z}{F^\times}$-module structure, then this is a module homomorphism, since 
$\delta(F^\times)$ centralizes $ST_3$ and stabilizes the $e(q)$, $q=1,2$.
\end{proof}
\begin{cor}\label{cor:triv}
There is an  exact sequence of $\gr{\Z}{F^\times}$-modules 
\[
\hoz{3}{\genl{2}{F}}\to\hoz{3}{\specl{3}{F}}\to Q\to 0
\]
where $\aug{F^\times}^2Q=0$.
\end{cor}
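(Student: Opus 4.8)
The plan is to extract the statement from the spectral sequence
$E^1_{p,q}=\ho{p}{\specl{3}{F}}{\Cgen{q}}\Rightarrow \hoz{p+q}{\specl{3}{F}}$ of the preceding subsections, together with the facts already in hand. Write $H:=\hoz{3}{\specl{3}{F}}$ and let $0\subseteq \filt{0}{H}\subseteq \filt{1}{H}\subseteq \filt{2}{H}\subseteq \filt{3}{H}=H$ be the associated filtration, indexed (by the resolution degree $q$) so that $\filt{s}{H}/\filt{s-1}{H}\cong E^\infty_{3-s,s}$. By the analysis of the case $q=0$, the edge map identifies the bottom step $\filt{0}{H}=E^\infty_{3,0}$ with the image of the homomorphism $\hoz{3}{\genl{2}{F}}\to H$ induced by the inclusion $\genl{2}{F}\hookrightarrow\specl{3}{F}$, $A\mapsto\mathrm{diag}(\det{A}^{-1},A)$. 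At the top, the lemma showing that $d^1\colon E^1_{0,4}\to E^1_{0,3}$ is surjective gives $E^\infty_{0,3}=E^2_{0,3}=0$, whence $\filt{3}{H}/\filt{2}{H}=0$, i.e. $\filt{2}{H}=H$.

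Now put $Q:=\coker{\hoz{3}{\genl{2}{F}}\to H}$. By the identification of $\filt{0}{H}$ we have $Q=H/\filt{0}{H}=\filt{2}{H}/\filt{0}{H}$, so the sequence $\hoz{3}{\genl{2}{F}}\to H\to Q\to 0$ is exact by construction; and $Q$ sits in a short exact sequence
\[
0\to \filt{1}{H}/\filt{0}{H}\to Q\to \filt{2}{H}/\filt{1}{H}\to 0
\]
with $\filt{1}{H}/\filt{0}{H}\cong E^\infty_{2,1}$ and $\filt{2}{H}/\filt{1}{H}\cong E^\infty_{1,2}$. These are exactly the terms controlled by Lemma \ref{lem:triv}: since $F^\times$ acts trivially on every $E^r_{p,1}$ and $E^r_{p,2}$, it acts trivially on the subquotients $E^\infty_{2,1}$ and $E^\infty_{1,2}$, so $\aug{F^\times}$ annihilates both ends of the displayed sequence. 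Hence $\aug{F^\times}\cdot Q\subseteq \filt{1}{H}/\filt{0}{H}$ and therefore $\aug{F^\times}^2\cdot Q=0$. Finally, since (via Lemma \ref{lem:tb}) the $F^\times$-action on $E^1_{3,0}\cong\hoz{3}{\genl{2}{F}}$ is conjugation by $\mathrm{diag}(1,a)\in\genl{2}{F}$, hence trivial on homology, all maps above are maps of $\gr{\Z}{F^\times}$-modules once $\hoz{3}{\genl{2}{F}}$ is given the trivial $F^\times$-action.

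There is no genuinely new computation here: the proof is the bookkeeping that splices together the case-by-case analysis of the $E^1$-term, the vanishing $E^2_{0,3}=0$, and Lemma \ref{lem:triv}. The one point that needs care is the indexing of the filtration, so that the two extreme graded pieces of $H$ are $E^\infty_{3,0}$ (the image of $\hoz{3}{\genl{2}{F}}$) and $E^\infty_{0,3}=0$, leaving $Q$ a two-step extension built precisely from the terms $E^\infty_{2,1}$ and $E^\infty_{1,2}$ on which Lemma \ref{lem:triv} gives triviality of the $F^\times$-action.
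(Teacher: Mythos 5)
Your argument is correct and is essentially the paper's own proof: both extract the exact sequence from the filtration of $\hoz{3}{\specl{3}{F}}$ given by the spectral sequence of the complex $\Cgen{\bullet}$, using $E^\infty_{0,3}=0$, the identification of $E^\infty_{3,0}$ with the image of $\hoz{3}{\genl{2}{F}}$, and the extension $0\to E^\infty_{2,1}\to Q\to E^\infty_{1,2}\to 0$ killed twice by $\aug{F^\times}$ via Lemma \ref{lem:triv}. Your version merely spells out the filtration bookkeeping (and the triviality of the $F^\times$-action on $\hoz{3}{\genl{2}{F}}$) that the paper leaves implicit.
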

\begin{proof}
Since $E^\infty_{0,3}=0$ and $E^\infty_{3,0}$ is the image of $\hoz{3}{\genl{2}{F}}$, the spectral sequence yields an exact sequence of 
of the type described where $Q$ fits into a short exact sequence (of $\gr{\Z}{F^\times}$-modules)
\[
0\to E^\infty_{2,1}\to Q\to E^\infty_{1,2}\to 0.
\]
By Lemma \ref{lem:triv}, $\aug{F^\times} E^\infty_{2,1}=\aug{F^\times} E^\infty_{1,2}=0$ and thus $\aug{F^\times}^2Q=0$.
\end{proof}
\begin{lem}\label{lem:h3}
For all infinite fields $F$, $\ho{0}{F^\times}{\hoz{3}{\specl{3}{F}}}=\hoz{3}{\specl{3}{F}}$.
\end{lem}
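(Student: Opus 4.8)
The plan is to show that the quotient module $Q$ from Corollary~\ref{cor:triv} carries a trivial $F^\times$-action, since then the exact sequence of $\gr{\Z}{F^\times}$-modules $\hoz{3}{\genl{2}{F}}\to\hoz{3}{\specl{3}{F}}\to Q\to 0$ exhibits $\hoz{3}{\specl{3}{F}}$ as an extension of the trivial module $Q$ by the image of $\hoz{3}{\genl{2}{F}}$, and the latter is trivial as an $F^\times$-module because $F^\times$ acts trivially on $\hoz{\bullet}{\genl{2}{F}}$ (conjugation being an inner, hence homologically trivial, automorphism). A short diagram chase then gives $\aug{F^\times}\cdot\hoz{3}{\specl{3}{F}}=0$, which is exactly the assertion $\ho{0}{F^\times}{\hoz{3}{\specl{3}{F}}}=\hoz{3}{\specl{3}{F}}$.

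The heart of the matter is therefore to upgrade Corollary~\ref{cor:triv}, which only gives $\aug{F^\times}^2 Q=0$, to $\aug{F^\times}Q=0$. By the short exact sequence $0\to E^\infty_{2,1}\to Q\to E^\infty_{1,2}\to 0$ together with Lemma~\ref{lem:triv} (which already gives that $F^\times$ acts trivially on $E^\infty_{2,1}$ and $E^\infty_{1,2}$), what remains is to control the extension: I would argue that for any $\gr{\Z}{F^\times}$-module $Q$ sitting in such a sequence with both sub and quotient having trivial action, the action on $Q$ factors through a homomorphism $F^\times\to\hom{\Z}{E^\infty_{1,2}}{E^\infty_{2,1}}$, i.e. $\ffist{a}\ffist{b}$ annihilates $Q$ — which is what Corollary~\ref{cor:triv} records — but more is true here because $F^\times$ is \emph{divisible-free of no help}; instead the cleanest route is to identify the terms explicitly. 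Both $E^1_{p,1}$ and $E^1_{p,2}$ are $\hoz{p}{ST_3}\cong\hoz{p}{F^\times\times F^\times}$ by the earlier case analysis, and the $d^1$ differentials between columns $q=1$ and $q=2$ can be written down via formula (\ref{formula}); I would compute that $E^2_{2,1}$ and $E^2_{1,2}$ are already quotients/sub-objects on which $\aug{F^\times}$ acts trivially, and that no later differentials reintroduce a nontrivial action.

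A more efficient variant, which I expect to be the one actually used: since $F$ is infinite, $F^\times$ is not finitely generated and in particular for any $a\in F^\times$ there is $b\in F^\times$ with $\langle a,b\rangle$ as large as needed; more to the point, the group ring argument is that an $F^\times$-module $Q$ killed by $\aug{F^\times}^2$ and generated (as the relevant subquotient of a homology group of $\specl{3}{F}$) in a way compatible with stabilization will have trivial action once one checks it on the submodule $E^\infty_{2,1}$ and the quotient $E^\infty_{1,2}$ and then checks that the connecting cocycle $F^\times\to\hom{\Z}{E^\infty_{1,2}}{E^\infty_{2,1}}$ vanishes — and this last vanishing follows because the extension $0\to E^\infty_{2,1}\to\filt{1}{\hoz{3}{\specl{3}{F}}}\cap(\cdots)\to E^\infty_{1,2}\to 0$ is $F^\times$-equivariantly split by the K\"unneth-compatible splitting already exhibited in the proof of Lemma~\ref{lem:comm} (the inclusion $T_1\otimes\ext{2}{ST_3}\to\ext{3}{T_3}$ and its image in homology).

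The main obstacle will be bookkeeping the $F^\times$-equivariance of these splittings and differentials precisely enough to conclude $\aug{F^\times}Q=0$ rather than merely $\aug{F^\times}^2Q=0$; once that is in hand, the final step is immediate. I would conclude by writing: since $\aug{F^\times}$ kills both $Q$ and the image of $\hoz{3}{\genl{2}{F}}$ in $\hoz{3}{\specl{3}{F}}$, and these fit in the exact sequence of Corollary~\ref{cor:triv}, we get $\aug{F^\times}\cdot\hoz{3}{\specl{3}{F}}=0$, i.e. $\ho{0}{F^\times}{\hoz{3}{\specl{3}{F}}}=\hoz{3}{\specl{3}{F}}$, as required.
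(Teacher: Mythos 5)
Your proposal has a genuine gap, in two places. First, the concluding step is invalid: knowing that $\aug{F^\times}$ annihilates both the image of $\hoz{3}{\genl{2}{F}}$ in $\hoz{3}{\specl{3}{F}}$ and the quotient $Q$ only yields $\aug{F^\times}^2\cdot\hoz{3}{\specl{3}{F}}=0$, not $\aug{F^\times}\cdot\hoz{3}{\specl{3}{F}}=0$; an extension of one trivial $\gr{\Z}{F^\times}$-module by another need not have trivial action (the action can be a nontrivial ``unipotent'' one), so even a complete proof that $F^\times$ acts trivially on $Q$ would not finish the argument. Second, your route to $\aug{F^\times}Q=0$ is itself not secured: the splitting $T_1\otimes \ext{2}{ST_3}\to\ext{3}{T_3}$ you invoke from the proof of Lemma \ref{lem:comm} lives in the Hochschild--Serre/K\"unneth picture for $T_3\subset\genl{3}{F}$, not in the spectral sequence of the $\specl{3}{F}$-action on the configuration complex $C_\bullet$, whose terms $E^\infty_{2,1}$ and $E^\infty_{1,2}$ are what make up $Q$; no $F^\times$-equivariant splitting of $0\to E^\infty_{2,1}\to Q\to E^\infty_{1,2}\to 0$ comes from that lemma, and the asserted vanishing of the connecting cocycle is exactly what would have to be proved.

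The paper argues differently and avoids both problems by proving $Q=0$ outright. One checks by an explicit symbol computation that the composite $\ext{3}{ST_3}\to\hoz{3}{\genl{2}{F}}\to\barhoz{3}{\specl{3}{F}}\to 2\cdot\milk{3}{F}$ is surjective (for instance $(a,1,a^{-1})\wedge(b,b^{-1},1)\wedge(c,c^{-1},1)$ maps to $-2\{a,b,c\}$); since the image of $\beta:\hoz{3}{\genl{2}{F}}\to\barhoz{3}{\specl{3}{F}}$ also contains the image of $\hoz{3}{\specl{2}{F}}$, the exact sequence $\barhoz{3}{\specl{2}{F}}\to\barhoz{3}{\specl{3}{F}}\to 2\cdot\milk{3}{F}\to 0$ of Corollary \ref{cor:main} (which rests on Theorem \ref{thm:main}) shows $\beta$ is surjective. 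As $\barhoz{3}{\specl{3}{F}}=\hoz{3}{\specl{3}{F}}/\aug{F^\times}\cdot\hoz{3}{\specl{3}{F}}$ by Lemma \ref{lem:d2}, surjectivity of $\beta$ means $\hoz{3}{\specl{3}{F}}$ is generated by the image of $\hoz{3}{\genl{2}{F}}$ together with $\aug{F^\times}\cdot\hoz{3}{\specl{3}{F}}$, hence $Q=\aug{F^\times}Q$, and then $Q=\aug{F^\times}^{2}Q=0$ by Corollary \ref{cor:triv}. The idea missing from your proposal is precisely this bootstrap: use the Milnor--Witt input of Theorem \ref{thm:main} (via Corollary \ref{cor:main}) to get $Q=\aug{F^\times}Q$ and combine it with $\aug{F^\times}^2Q=0$, rather than trying to upgrade the square-zero statement to triviality of the action on $Q$.
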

\begin{proof}
This amounts to showing that the natural action of $F^\times$ on $\hoz{3}{\specl{3}{F}}$ is trivial. 

Since the action of $F^\times$ on $\hoz{3}{\genl{2}{F}}$ is trivial, it is enough to prove that $Q=0$.

Now
\[
\barhoz{3}{\specl{3}{F}}\cong \ho{0}{F^\times}{\hoz{3}{\specl{3}{F}}}=\frac{\hoz{3}{\specl{3}{F}}}{\aug{F^\times}\cdot \hoz{3}{\specl{3}{F}}}.
\]

Observe that the composite $\ext{3}{ST_3}\to \hoz{3}{\genl{2}{F}}\to\barhoz{3}{\specl{3}{F}}\to 2\milk{3}{F}$ is surjective, since for 
example the element $(a,1,a^{-1})\wedge (b,b^{-1},1)\wedge (c,c^{-1},1)$ maps to $\{ a^{-1},b^{-1},c\}+\{ a^{-1},b,c^{-1}\}=-2\{ a,b,c\}$.
Since the image of $\beta: \hoz{3}{\genl{2}{F}}\to\barhoz{3}{\specl{3}{F}}$ also contains the image of $\hoz{3}{\specl{2}{F}}\to 
\barhoz{3}{\specl{3}{F}}$, $\beta$ is surjective by Theorem \ref{thm:main}. 

In the diagram
\begin{eqnarray*}
\xymatrix{
&0\ar[d]&&\\
&\aug{F^\times}\cdot\hoz{3}{\specl{3}{F}}\ar[d]\ar[dr]^{\alpha}&&\\
\hoz{3}{\genl{2}{F}}\ar[r]\ar[dr]^{\beta}&\hoz{3}{\specl{3}{F}}\ar[r]\ar[d]&Q\ar[r]&0\\
&\barhoz{3}{\specl{3}{F}}\ar[d]&&\\
&0&&\\
}
\end{eqnarray*}
$\beta$ is surjective and hence so is $\alpha$. It follows that $Q=\aug{F^\times} Q$. Thus $Q=\aug{F^\times}^2 Q=0$ 
using Corollary \ref{cor:triv}.
\end{proof}

\begin{thm}
If $F$ is an infinite field then
\[
\hoz{3}{\specl{3}{F}}\cong\hoz{3}{\specl{4}{F}}\cong\cdots \cong \hoz{3}{\specl{}{F}}
\]
and there is a natural exact sequence
\[
\hoz{3}{\specl{2}{F}}\to\hoz{3}{\specl{3}{F}}\to 2\milk{3}{F}\to 0.
\]
\end{thm}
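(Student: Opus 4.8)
The plan is to assemble the statement from the results already established; by this point the substantive work — Theorem \ref{thm:main} (via Milnor--Witt $K$-theory and the Milnor conjecture) and Lemma \ref{lem:h3} (via the $\proj{2}{F}$-spectral sequence) — is done, and what remains is a diagram chase together with one appeal to classical stability in the range $n\geq 4$.

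The first step is to record that $\hoz{3}{\specl{3}{F}}=\barhoz{3}{\specl{3}{F}}$: by Lemma \ref{lem:h3} the group $F^\times$ acts trivially on $\hoz{3}{\specl{3}{F}}$, so $\hoz{3}{\specl{3}{F}}=\ho{0}{F^\times}{\hoz{3}{\specl{3}{F}}}$, and by Lemma \ref{lem:d2} this equals $\barhoz{3}{\specl{3}{F}}$. Section 3 has already shown, using Suslin's stability theorem \ref{thm:suslin}(a), that the natural maps $\barhoz{3}{\specl{n}{F}}\to\barhoz{3}{\specl{n+1}{F}}$ are isomorphisms for every $n\geq 3$ (since $\barhoz{3}{\specl{n}{F}}=E^\infty_{0,3}(n)$ and $0+3\leq n$). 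It remains to see that $\hoz{3}{\specl{n}{F}}=\barhoz{3}{\specl{n}{F}}$ for all $n\geq 3$ as well; by Lemma \ref{lem:d2} this amounts to triviality of the $F^\times$-action on $\hoz{3}{\specl{n}{F}}$. For $n=3$ this is Lemma \ref{lem:h3}. For $n\geq 4$ I would invoke the classical stability isomorphisms $\hoz{3}{\specl{n}{F}}\cong\hoz{3}{\specl{n+1}{F}}$ of Sah \cite{sah:discrete3}, which identify $\hoz{3}{\specl{n}{F}}$, $F^\times$-equivariantly, with $\hoz{3}{\spcl{F}}$; and on $\hoz{3}{\spcl{F}}$ the action is trivial, because the shift homomorphism $\specl{m}{F}\to\specl{m+1}{F}$, $g\mapsto\mathrm{diag}(1,g)$, differs from the standard inclusion by conjugation by a permutation matrix that may be chosen in $\specl{m+1}{F}$, so the shift induces the identity on $\hoz{3}{\spcl{F}}$, while $\mathrm{diag}(a,1,1,\dots)$ centralizes the shifted copy of $\spcl{F}$ and hence acts trivially on its image, which is all of $\hoz{3}{\spcl{F}}$.

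Granting this, for each $n\geq 3$ the evident commutative square relating the inclusion-induced map $\hoz{3}{\specl{n}{F}}\to\hoz{3}{\specl{n+1}{F}}$ to $\barhoz{3}{\specl{n}{F}}\to\barhoz{3}{\specl{n+1}{F}}$ via the canonical surjections has its other three maps all isomorphisms, so the top map is an isomorphism too; this gives $\hoz{3}{\specl{3}{F}}\cong\hoz{3}{\specl{4}{F}}\cong\cdots\cong\hoz{3}{\spcl{F}}$. For the exact sequence I would start from Corollary \ref{cor:main}, which asserts that $\barhoz{3}{\specl{2}{F}}\to\barhoz{3}{\specl{3}{F}}\to 2\milk{3}{F}\to 0$ is exact with second arrow $\ee$; since $\hoz{3}{\specl{2}{F}}\to\barhoz{3}{\specl{2}{F}}$ is (by definition) onto and $\hoz{3}{\specl{3}{F}}\to\barhoz{3}{\specl{3}{F}}$ is the isomorphism of the first step, the evident commutative square shows that $\hoz{3}{\specl{2}{F}}\to\hoz{3}{\specl{3}{F}}$ and $\barhoz{3}{\specl{2}{F}}\to\barhoz{3}{\specl{3}{F}}$ have the same cokernel, namely $2\milk{3}{F}$, with the induced map $\hoz{3}{\specl{3}{F}}\to 2\milk{3}{F}$ being the $\ee$-map and hence natural in $F$. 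The one point I expect to be delicate is the $n\geq 4$ portion of stability: the $\proj{2}{F}$-machinery of Section 4 does not apply there (the relevant general-position complex over $\proj{n-1}{F}$ is not acyclic far enough), so triviality of the $F^\times$-action for $n\geq 4$ has to be imported from, or reproved by, the standard homology-stability arguments — everything else is formal once Theorem \ref{thm:main} and Lemma \ref{lem:h3} are in hand.
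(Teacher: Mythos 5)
Your assembly of the theorem from Lemma \ref{lem:d2}, Corollary \ref{cor:main} and Lemma \ref{lem:h3} is exactly the paper's, and your argument that the conjugation action of $F^\times$ on the stable group $\hoz{3}{\spcl{F}}$ is trivial (shift vs.\ standard inclusion, centralizing element) is correct. The gap is in how you handle the finite levels $n\geq 4$, which is the only substantive step left at this stage. You discard the general-position machinery on the grounds that the complex of tuples of points of $\proj{n-1}{F}$ in general position ``is not acyclic far enough''; this premise is false. For an infinite field the standard argument (given a cycle, choose a point in general position with respect to the finitely many points occurring in it; the excluded locus is a finite union of proper linear subvarieties) gives acyclicity of $C_\bullet(n)$ for every $n$, and the paper's proof of this theorem consists precisely in running the same spectral sequence for $\specl{n}{F}$ acting on tuples in $\proj{n-1}{F}$: for $n\geq 4$ the computation is in fact simpler than for $n=3$, since a direct calculation with formula (\ref{formula}) shows $E^2_{0,3}=E^2_{1,2}=E^2_{2,1}=0$, so that $\hoz{3}{\specl{n}{F}}$ is the image of $\hoz{3}{\genl{n-1}{F}}$, on which $F^\times$ visibly acts trivially (alternatively, one shows as in Lemma \ref{lem:triv} that $F^\times$ acts trivially on $E^1_{1,2}$ and $E^1_{2,1}$ and argues as in Lemma \ref{lem:h3}).

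Having rejected that route, you replace it by invoking Sah's stability isomorphisms $\hoz{3}{\specl{n}{F}}\cong\hoz{3}{\specl{n+1}{F}}$ for $n\geq 4$. But, as the introduction points out, Sah states these isomorphisms without proof, and establishing them is part of what the present theorem does; so as written your argument outsources its key remaining step to an unproven citation (you concede it may have to be ``reproved by the standard homology-stability arguments'', but you do not carry this out). The rest of your proposal — the $F^\times$-equivariance of the stabilization maps, and the cokernel identification using the surjection $\hoz{3}{\specl{2}{F}}\to\barhoz{3}{\specl{2}{F}}$ together with the isomorphism $\hoz{3}{\specl{3}{F}}\cong\barhoz{3}{\specl{3}{F}}$ — is correct and agrees with the paper.
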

\begin{proof}
In view of Lemma \ref{lem:d2}, Corollary \ref{cor:main} and Lemma \ref{lem:h3} it remains to show that the action of $F^\times$ on
$\hoz{3}{\specl{n}{F}}$ is trivial for all $n\geq 4$. This can be accomplished in the same way as the case $n=3$, but the 
computations are simpler: Consider the spectral sequence associated to the action of $\specl{n}{F}$ on the complex $C_\bullet(n)$ where 
$C_q(n)$ has as a basis the set of $(q+1)$-tuples of points of $\proj{n-1}{F}$ in general position. The $E^\infty$ 
terms on the base are the 
image of the homology of $\genl{n-1}{F}$ as before. It is now 
easy to show by direct computation, using formula (\ref{formula}) above, that the terms $E^2_{0,3}$, $E^2_{1,2}$ and 
$E^2_{2,1}$ are all zero (this is not the case when $n=3$). Alternatively one can show as above that $F^\times$ acts trivially on 
$E^1_{1,2}$ and $E^1_{2,1}$ and argue as in the case $n=3$. 
\end{proof}

\section{Concluding Remarks: Indecomposable $K_3$.}

We let $\kf{\bullet}{F}$ be the Quillen $K$-theory of the field $F$. There is a natural homomorphism of graded rings 
$\milk{\bullet}{F}\to \kf{\bullet}{F}$, which is an isomorphism in dimensions at most $2$. Let $\qmilk{\bullet}{F}$ denote the image of this
homomorphism.The kernel of the surjective homomorphism $\milk{3}{F}\to\qmilk{3}{F}$ is known to be $2$-torsion, and it is an open question 
whether this homomorphism is an isomorphism. 

The indecomposable $K_3$ of the field $F$ is the group $\indk{3}{F}:=\kf{3}{F}/\qmilk{3}{F}$. 

Suslin has shown that the Hurewicz homomorphism 
of algebraic $K$-theory yields a surjective homomorphism $\kf{3}{F}\to\hoz{3}{\spcl{F}}$ whose
kernel is $\{ -1\}\cdot\kf{2}{F}\subset\qmilk{3}{F}$ (\cite{sus:bloch}, Corollary 5.2 or see 
Knudson, \cite{knudson:book}). 
It follows that there is an induced surjective map 
$\hoz{3}{\specl{3}{F}}=\hoz{3}{\spcl{F}}\to \indk{3}{F}$. In view of the results above, we have 

\begin{lem} For any infinite field $F$, the natural homomorphism  $\hoz{3}{\specl{2}{F}}\to \indk{3}{F}$
is surjective.
\end{lem}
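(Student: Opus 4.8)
The plan is to combine three ingredients established above: Suslin's surjection $\pi\colon\hoz{3}{\specl{3}{F}}=\hoz{3}{\spcl{F}}\to\indk{3}{F}$, the stability isomorphisms $\hoz{3}{\specl{3}{F}}\cong\hoz{3}{\specl{n}{F}}$ for $n\geq 3$, and the exact sequence $\hoz{3}{\specl{2}{F}}\xrightarrow{\iota}\hoz{3}{\specl{3}{F}}\xrightarrow{\ee}2\milk{3}{F}\to 0$. Since the homomorphism in the statement is $\pi\circ\iota$ and $\pi$ is onto, its cokernel is $\indk{3}{F}/\pi(\image{\iota})\cong\hoz{3}{\specl{3}{F}}/(\image{\iota}+\ker{\pi})$; and since $\ee$ is surjective with kernel $\image{\iota}$, this quotient is isomorphic to $2\milk{3}{F}/\ee(\ker{\pi})$. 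So the Lemma is equivalent to the assertion that $\ee$ carries $\ker{\pi}$ onto all of $2\milk{3}{F}$.

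Next I would identify $\ker{\pi}$ concretely. By Suslin's theorem ($\cite{sus:bloch}$), $\ker{\kf{3}{F}\to\hoz{3}{\spcl{F}}}=\{-1\}\cdot\kf{2}{F}\subseteq\qmilk{3}{F}$, and $\indk{3}{F}=\kf{3}{F}/\qmilk{3}{F}$; hence $\ker{\pi}$ is precisely the image of $\qmilk{3}{F}$ — equivalently of $\milk{3}{F}$ — in $\hoz{3}{\spcl{F}}$ under the natural maps $\milk{3}{F}\to\kf{3}{F}\to\hoz{3}{\spcl{F}}$. Thus the statement reduces to the surjectivity of the composite
\[
\psi\colon\milk{3}{F}\to\kf{3}{F}\to\hoz{3}{\spcl{F}}=\hoz{3}{\specl{3}{F}}\xrightarrow{\ee}2\milk{3}{F}.
\]

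To evaluate $\psi$ on a symbol $\{a,b,c\}$ I would produce an explicit $3$-cycle supported on the diagonal torus $ST_3\subseteq\specl{3}{F}$ representing the image of $\{a,b,c\}$ in $\hoz{3}{\specl{3}{F}}$, and compute $\ee$ of it by means of Theorem $\ref{thm:suslin}$(c). The compatibility of the product on $\kf{\bullet}{F}$ with homology cross products, together with the standard $(n-1)!$ factor relating the tensor-product and ``flag'' normalizations of $\ee$ (here $(3-1)!=2$), should give $\psi(\{a,b,c\})=\pm 2\{a,b,c\}$; note that the same $\ee$-value $\pm 2\{a,b,c\}$ already occurs for the torus cycle $(a,1,a^{-1})\wedge(b,b^{-1},1)\wedge(c,c^{-1},1)$ in the proof of Lemma $\ref{lem:h3}$. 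Since the elements $2\{a,b,c\}$ generate $2\milk{3}{F}$, this makes $\psi$ surjective, and the Lemma follows.

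The hard part is exactly this computation of $\psi$. Naturality in $F$ together with the inclusion $\image{\psi}\subseteq 2\milk{3}{F}$ already forces $\psi$ to be multiplication by an even integer on $\milk{3}{F}$; the real issue is to show that integer is $\pm 2$, and not a proper multiple of it, so that the image is all of $2\milk{3}{F}$. This requires tracking Suslin's description of the image of Milnor $K$-theory inside $\hoz{3}{\gnl{F}}$ through the splitting of $\hoz{3}{\gnl{F}}$ arising from $1\to\spcl{F}\to\gnl{F}\to F^\times\to 1$ and through the precise normalization of the map $\ee$.
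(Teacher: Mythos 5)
Your reduction is sound and is in fact the same one the paper uses: writing the map as $\pi\circ\iota$ with $\ker{\pi}$ the image of $\qmilk{3}{F}$ (equivalently of $\milk{3}{F}$) in $\hoz{3}{\spcl{F}}$, surjectivity is equivalent to the surjectivity of the composite $\psi\colon\milk{3}{F}\to\kf{3}{F}\to\hoz{3}{\spcl{F}}\xrightarrow{\ee}2\milk{3}{F}$, i.e.\ of the map the paper calls $\gamma$. The gap is that you never actually establish the one fact everything hinges on, namely that $\psi$ is multiplication by $\pm 2$ rather than by some other even integer: you write that the cycle computation ``should give'' $\pm2\{a,b,c\}$ and then concede that pinning down the integer is ``the hard part'' which ``requires tracking Suslin's description.'' Knowing only that $\psi$ is multiplication by an even integer is not enough --- multiplication by $4$, say, is not in general onto $2\milk{3}{F}$ --- so as written the argument does not close. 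Moreover, your supporting observation is not probative: the fact (from the proof of Lemma \ref{lem:h3}) that the torus cycle $(a,1,a^{-1})\wedge(b,b^{-1},1)\wedge(c,c^{-1},1)$ has $\ee$-value $-2\{a,b,c\}$ only re-proves the surjectivity of $\ee$ on all of $\hoz{3}{\specl{3}{F}}$ (already Corollary \ref{cor:main}); it does not show that this torus cycle represents the Hurewicz image of the Milnor symbol $\{a,b,c\}$, which is exactly the identification you would need and have not supplied.

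The paper closes precisely this gap by citation rather than computation: Suslin (\cite{sus:homgln}, Corollary 4.4) proves that the composite $\milk{3}{F}\to\kf{3}{F}\xrightarrow{H}\hoz{3}{\gnl{F}}\to\milk{3}{F}$ is multiplication by $2$ (the $(n-1)!$ factor you allude to, for $n=3$), and then a commutative diagram comparing $\hoz{3}{\spcl{F}}\to 2\milk{3}{F}$ with $\hoz{3}{\gnl{F}}\to\milk{3}{F}$ transfers this to $\gamma=\psi$. If you replace your unfinished cycle computation by an appeal to that result of Suslin (together with the compatibility of the two $\ee$-type maps, which follows from the arguments in the paper), your proof becomes complete and coincides with the paper's.
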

\begin{proof}
We have the following diagram (defining the maps $\gamma$ and $\delta$) with exact row and column
\begin{eqnarray*}
\xymatrix{
&\milk{3}{F}\ar[d]\ar[dr]^{\gamma}&&\\
\hoz{3}{\specl{2}{F}}\ar[r]\ar[dr]^{\delta}&\kf{3}{F}/(\{ -1\}\cdot\kf{2}{F})\ar[r]\ar[d]&2\milk{3}{F}\ar[r]&0\\
&\indk{3}{F}\ar[d]&&\\
&0&&\\
}
\end{eqnarray*}
Now Suslin has shown that the composite 
\begin{eqnarray*}
\xymatrix{
\milk{3}{F}\ar[r]
&
\kf{3}{F}\ar[r]^-{H}
&
\hoz{3}{\gnl{F}}\ar[r]
&\milk{3}{F}
}
\end{eqnarray*}
(where $H$ is the Hurewicz homomorphism) is just multplication by $2$ (\cite{sus:homgln}, Corollary 4.4).
 Since, by the arguments above, the diagram
\begin{eqnarray*}
\xymatrix{
\kf{3}{F}\ar[r]\ar[rd]^-{H}
&
\hoz{3}{\spcl{F}}\ar[r]\ar[d]
&
2\milk{3}{F}\ar[d]\\
&\hoz{3}{\gnl{F}}\ar[r]
&
\milk{3}{F}\\
}
\end{eqnarray*}
commutes, it follows that $\gamma$ is multiplication by $2$ and so is surjective. This implies the surjectivity of $\delta$.
\end{proof}

 Regarding the kernel of the map $\hoz{3}{\specl{2}{F}}\to\indk{3}{F}$, Suslin has asked:
\begin{quote}
Is it the case that $\ho{0}{F^\times}{\hoz{3}{\specl{2}{F}}}$ coincides with 
$\indk{3}{F}$ for all (infinite) fields $F$? (see for example Sah, \cite{sah:discrete3}). 
\end{quote}

In \cite{sah:discrete3}, Sah proves that
the answer is affirmative when $F$ satisfies $(F^\times)^6=F^\times$, and also for $F=\R$. More generally, he states that the natural map 
 $\ho{0}{F^\times}{\hoz{3}{\specl{2}{F}}}\to \indk{3}{F}$ is an 
 isomorphism modulo the Serre class of torsion abelian groups annihilated 
by a power of $6$.

Since then, P. Elbaz-Vincent, \cite{pev:indk3},
 proved that for any semi-local ring $R$ with infinite residue fields there is an isomorphism
\[
\indk{3}{R}\otimes\Q\cong\ho{0}{R^\times}{\ho{3}{\specl{2}{R}}{\Q}}.
\]  
More recently, B. Mirzaii, \cite{mirzaii:3rd}, has improved on Sah's result by showing that if $F$ is an infinite field, then 
\[
\indk{3}{F}\otimes\Z[1/2]\cong\ho{0}{F^\times}{\ho{3}{\specl{2}{F}}{\Z[1/2]}}
\] 
and that if $F^\times$ is $2$-divisible, then 
\[
\indk{3}{F}\cong \hoz{3}{\specl{2}{F}}.
\]

In view of the results above, we note that an affirmative answer to Suslin's question for a given infinite field $F$ is equivalent 
to the truth of the following three statements:
\begin{enumerate}
\item[(i)] The image of $\hoz{3}{\specl{2}{F}}$ in $\hoz{3}{\specl{3}{F}}$ is isomorphic to $\indk{3}{F}$.
\item[(ii)] The map $\hoz{3}{\genl{2}{F}}\to\hoz{3}{\genl{3}{F}}$ is injective, and
\item[(iii)] The differential $d^2_{2,2}(2)$ is zero.
\end{enumerate}

\section{Acknowledgements}
The work in this article was partially funded by  the Science Foundation Ireland Research Frontiers Programme grant 05/RFP/MAT0022.
\bibliographystyle{alpha}
\bibliography{HomologySL}
\end{document}